\documentclass[aop, preprint, 12pt]{imsart}

\RequirePackage[OT1]{fontenc}
\RequirePackage{amsthm,amsmath}
\RequirePackage[numbers]{natbib}
\RequirePackage[colorlinks,citecolor=blue,urlcolor=blue]{hyperref}
\usepackage{amsfonts}
\usepackage{color}
\usepackage{amssymb}
\usepackage[english]{babel}
\usepackage[T1]{fontenc}
\usepackage{graphicx}
\usepackage{subfigure}
\usepackage{latexsym}
\usepackage{amstext}
\usepackage{mathrsfs}
\usepackage{hyperref}
\usepackage{dsfont}
\usepackage{graphics}
\usepackage{enumerate}
\usepackage{paralist}
\usepackage{color}
\usepackage{fullpage}
\newtheorem{prop}{Proposition}[section]
\newtheorem{rem}[prop]{Remark}

\numberwithin{equation}{section}

\newcommand{\beq}{\begin{eqnarray}}
\newcommand{\beqq}{\begin{eqnarray*}}
\newcommand{\eeq}{\end{eqnarray}}
\newcommand{\eeqq}{\end{eqnarray*}}



\usepackage{mathtools}
\usepackage[english]{babel}
\usepackage[utf8]{inputenc}
\usepackage{amsmath}
\usepackage{graphicx}
\usepackage[colorinlistoftodos]{todonotes}
\usepackage{polynom}
\usepackage{amsfonts}
\usepackage{dsfont}
\usepackage{amsthm}
\usepackage{hyperref}
\usepackage{graphicx}
\setlength{\oddsidemargin}{0pt}
\setlength{\textwidth}{470pt}
\setlength{\marginparsep}{0pt} \setlength{\marginparwidth}{60pt}
\setlength{\topmargin}{20pt} \setlength{\headheight}{0pt}
\setlength{\headsep}{0pt} \setlength{\textheight}{640pt}
\setlength{\footskip}{20pt}
\newtheorem{theorem}{Theorem}[section]

\usepackage{fullpage}
\usepackage[T1]{fontenc}
\usepackage{hyperref}
\usepackage{xcolor}
\definecolor{link-color}{rgb}{0.15,0.4,0.15}
\hypersetup{
  colorlinks,
  linkcolor = {link-color}, citecolor = {link-color},
}

\usepackage{graphicx}
\usepackage{hyperref}
\usepackage{amsmath,amsthm,amssymb}
\usepackage{bbm}
\usepackage{tabularx}



\newcommand{\NN}{\mathbb{N}}










\newenvironment{eqnarr}{\begin{IEEEeqnarray}{rCl}}{\end{IEEEeqnarray}\ignorespacesafterend}

\renewcommand{\eqref}[1]{\hyperref[#1]{(\ref*{#1})}}
















    \def\beq{\begin{eqnarr}}
    \def\eeq{\end{eqnarr}}
    \def\beqq{\begin{eqnarray*}} 
    \def\eeqq{\end{eqnarray*}} 

        \def\d{{\rm d}}

    \def\d{{\textnormal d}}


\newcommand*{\pref}[1]{\hyperref[#1]{(\ref*{#1})}}
\newcommand*{\refpref}[2]{\hyperref[#2]{\ref*{#1}(\ref*{#2})}}


%


\startlocaldefs
\numberwithin{equation}{section}
\theoremstyle{plain}

\endlocaldefs

\begin{document}

\begin{frontmatter}
\title{Skeletal stochastic differential equations for continuous-state branching process}

\runtitle{Coupled SDEs for skeletally decomposed CSBPs}

\begin{aug}
%

\author{\fnms Dorottya Fekete\thanksref{t1}\ead[label=e1]{d.fekete@bath.ac.uk, a.kyprianou@bath.ac.uk}},
\author{\fnms Joaquin Fontbona\thanksref{t2}\ead[label=e2]{fontbona@dim.uchile.cl}}
\and
\author{\fnms{Andreas E. Kyprianou}\thanksref{t3}\ead[label=e1]{d.fekete@bath.ac.uk, a.kyprianou@bath.ac.uk}}

\ead[label=e3]{third@somewhere.com}

\thankstext{t2}{Supported by Basal-Conicyt Centre for Mathematical Modelling and Millenium Nucleus NC120062}
\thankstext{t1}{Supported  by a scholarship from the EPSRC Centre for Doctoral Training, SAMBa}
\thankstext{t3}{Supported by EPSRC grant EP/L002442/1}


\affiliation{University of Bath, Universidad de Chile and University of Bath}

\address{Department of Mathematical Sciences \\
University of Bath\\
 Claverton Down\\ Bath, BA2 7AY\\
 UK.
\printead{e1}
}

\address{Centre for Mathematical Modelling, \\
DIM CMM, \\
UMI 2807 UChile-CNRS,\\
 Universidad de Chile, \\
 Santiago, \\
 Chile.
\printead{e2}
}
\end{aug}

\begin{abstract}\hspace{0.1cm}
It is well understood that a supercritical continuous-state branching process (CSBP) is equal in law to a discrete continuous-time Galton Watson process (the {\it skeleton} of {\it prolific individuals}) whose edges are dressed in a Poissonian way with immigration  which initiates subcritical CSBPs ({\it non-prolific mass}). 

Equally well understood in the setting of CSBPs and superprocesses is the notion of a {\it spine or immortal particle} dressed in a Poissonian way with immigration which initiates copies of the original CSBP, which emerges when conditioning the process to survive eternally. 

In this article, we revisit these notions for CSBPs and put them in a common framework using the well-established language of (coupled) SDEs (cf. \cite{DL1, DL, BLeG}). In this way, we are able to deal simultaneously with all types of CSBPs (supercritical, critical and subcritical) as well as understanding how the skeletal representation becomes, in the sense of weak convergence, a spinal decomposition when conditioning on survival.

We have two principal motivations. The first is to prepare the way to expand the SDE approach to the spatial setting of superprocesses, where recent results have increasingly sought the use of skeletal decompositions to transfer results from the branching particle setting to the setting of measure valued processes; cf. \cite{KM-SP, eckhoff, piotr}. The second is to provide a pathwise decomposition of CSBPs in the spirit of  genealogical coding of CSBPs via L\'evy excursions in Duquesne and LeGall \cite{DLG} albeit precisely where the aforesaid  coding fails to work because the underlying CSBP is supercritical.

\end{abstract}

\begin{keyword}[class=MSC]
\kwd[Primary ]{60J80, 60H30}
\kwd{}
\kwd[; secondary ]{60G99}
\end{keyword}

\begin{keyword}
\kwd{Continuous-state branching processes, stochastic differential equations, skeletal decomposition, spine decomposition}
\end{keyword}

\end{frontmatter}

\section{Introduction}

In this article we are interested in  $X=(X_t, t\geq 0)$  a  continuous-state, finite-mean branching process (CSBP). In particular, this means that $X$ is a   $[0,\infty)$-valued strong Markov process with absorbing state at zero and with law on $\mathbb{D}([0,\infty),\mathbb{R})$ (the space of c\`adl\`ag mappings from $[0,\infty)$ to $\mathbb{R}$) given by $\mathbb{P}_x$ for each initial  state $x\geq 0$, such that $\mathbb{P}_{x+y}=\mathbb{P}_x*\mathbb{P}_y$. Here, $\mathbb{P}_{x+y}=\mathbb{P}_x*\mathbb{P}_y$ means that the sum of two independent processes, one issued from $x$ and the other issued from $y$, has the same law as the process issued from $x+y$. Its semigroup is characterised by the Laplace functional 
\begin{equation}
\mathbb{E}_x({\rm e}^{-\theta X_t}) = {\rm e}^{-  xu_t(\theta)}, \qquad x, \theta, t\geq 0,\label{semigroup}
\end{equation}
where 
$u_t(\theta)$ uniquely solves the evolution equation
\begin{equation}
u_t(\theta) + \int_0^t \psi(u_s(\theta) ){\rm d}s = \theta ,\qquad t\geq 0 .
\label{int-equa}
\end{equation}
Here, we assume that the so-called  branching mechanism
$\psi$ takes the form
\begin{equation}
\psi(\theta) = -\alpha \theta + \beta\theta^2 + \int_{(0,\infty )} ({\rm e}^{-\theta x} - 1 + \theta x)\Pi({\rm d}x),\,\, \theta \geq 0,
\label{mechanism}
\end{equation}
where $\alpha\in\mathbb{R}$, $\beta\geq 0$ and $\Pi$ is a measure concentrated on $(0,\infty)$ which satisfies $\int_{(0,\infty)}(x\wedge x^2)\Pi({\rm d}x)<\infty$. 
These restrictions on $\psi$ are very mild and only exclude the possibility of having a non-conservative process or processes which have an infinite mean growth rate. 

 We also assume for convenience that $-\psi$ is not the Laplace exponent of a subordinator (i.e. a Bernstein function), thereby ruling out the case that $X$ has monotone paths. It is easily checked that $\psi$ is an infinitely smooth convex function on $(0,\infty)$ with at most two roots in $[0,\infty)$. More precisely, $0$ is always a root, however if $\psi'(0+)<0$, then there is a second root in $(0,\infty)$. 

The process $X$ is henceforth referred to as a $\psi$-CSBP.  It is easily verified that   
\begin{equation}
\mathbb{E}_x[X_t] = x{\rm e}^{-\psi'(0+)t}, \qquad t,x\geq0. 
\label{meangrowth}
\end{equation}
The mean growth of the process is therefore characterised by $\psi'(0+)$ and accordingly we classify CSBPs by the value of this constant. We say that the $\psi$-CSBP is supercritical, critical or subcritical accordingly as $-\psi'(0+) = \alpha$ is strictly positive, equal to zero or strictly negative, respectively.

\medskip

It is known that the process $(X,\mathbb{P}_x)$, $x>0$, can also be represented as the unique strong solution to the 
stochastic differential equation (SDE) 
 \begin{align}\label{eqflow}
    X_t= x& + \alpha \int_{0}^{t} X_{s-}{\rm d}s 
                      + \sqrt{2\beta}  \int_{0}^{t}\int_{0}^{X_{s-}} W({\rm d}s,{\rm d}u) 
                      + \int_{0}^{t}\int_{0}^{\infty}\int_{0}^{X_{s-}}r \tilde{N}({\rm d}s, {\rm d}r, {\rm d}\nu), 
    \end{align}
 for $   x>0, t\geq 0,  $
where  $W({\rm d}s, {\rm d}u)$  is  a white noise process 
on $(0, \infty)^{2}$ based on the Lebesgue measure ${\rm d}s\otimes {\rm d}u $ and 
 $N({\rm d}s,{\rm d}r,{\rm d}\nu)$ is  a Poisson point process 
 on $[0, \infty)^{3}$ with intensity 
${\rm d}s\otimes \Pi({\rm d}r) \otimes {\rm d}\nu$. Moreover, we  denote by $\tilde{N}({\rm d}s,{\rm d}r,{\rm d}\nu)$ the compensated measure of $N({\rm d}s,{\rm d}r,{\rm d}\nu)$. See  \cite{DL1, DL, BLG} for this fact and further  properties of the above SDEs.

Through the representation of a CSBP as either a strong Markov process whose semi-group is characterised by an integral equation, or as a solution to an SDE,  
there are three fundamental probabilistic decompositions that play a crucial role in motivating the main results in this paper. These concern CSBPs conditioned to die out, CSBPs conditioned to survive and a path decomposition of the supercritical CSBPs.

\medskip

\noindent{\bf CSBPs conditioned  to die out.}  To understand what this means, let us momentarily recall that for all supercritical continuous-state branching processes (without immigration) the event $\{\lim_{t\to\infty} X_t =0\}$ occurs with positive probability. Moreover, for all $x\geq 0$,
\[
\mathbb{P}_x(\lim_{t\uparrow\infty} X_t =0) = {\rm e}^{-\lambda^* x},
\]
where $\lambda^*$ is the unique root on $(0,\infty)$ of the equation $\psi(\theta) = 0$. Note that $\psi$ is strictly convex with the property that $\psi(0) = 0$ and $\psi(+\infty) = \infty$, thereby ensuring that the root $\lambda^*>0$ exists; see Chapter 8 and 9 of \cite{Kbook} for further details. It is straightforward to show that
the law of $(X, \mathbb{P}_x)$ conditional on the event $\{\lim_{t\uparrow\infty} X_t =0\}$, say $\mathbb{P}^*_x$, agrees with the law of a $\psi^*$-CSBP, where
\begin{equation}\label{psi*}
\psi^*(\theta ) = \psi(\theta + \lambda^*).
\end{equation}
See for example \cite{Sheu}.

\medskip

\noindent{\bf CSBPs conditioned to survive.} The event $\{\lim_{t\to\infty}X_t=0\}$ can be categorised further according to whether its intersection with $\{X_t >0 \text{ for all }t\geq 0\}$ is empty or not. The classical work of Grey \cite{Grey} distinguishes between these two cases according to an integral test. Indeed, the intersection is empty if and only if 
\begin{equation}
\int^\infty\frac{1}{\psi(\theta)}\d u <\infty.
\label{grey}
\end{equation}
If we additionally assume that $-\psi'(0+)=\alpha \leq 0$, that is to say, the process is critical or subcritical, then it is known that the notion of conditioning the process to stay positive can be made rigorous through a limiting procedure. More precisely, if we write 
\[
\zeta = \inf\{t>0: X_t =0\},
\]
 then for all $A\in \mathcal{F}_t^X: = \sigma(X_s : s\leq t)$ and $x>0$,
\[
\mathbb{P}^\uparrow_x(A) :=\lim_{s\to\infty}\mathbb{P}_x(A| \zeta> t+s)
\]
is well defined as a probability measure and satisfies the Doob $h$-transform
\begin{equation}
\left.\frac{\d \mathbb{P}_x^\uparrow}{\d \mathbb{P}_x}\right|_{\mathcal{F}^X_t} ={\rm e}^{-\alpha t} \frac{X_t}{x}\mathbf{1}_{\{t<\zeta\}}.
\label{staypos}
\end{equation}

In addition, $(X, \mathbb{P}^\uparrow_x)$, $x>0$, has been shown to be equivalent in law to a process which has  a pathwise description which we give below. Before doing so, we need to introduce some more notation. To this end, define $N^*$ to be a Poisson random measure  
 on $[0,\infty)^2\times\mathbb{D}([0,\infty), \mathbb{R})$  with intensity measure $\d s\otimes r\Pi(\d r)\otimes\mathbb{P}_r(\d\omega)$. Moreover,  $\mathbb{Q}$ is the intensity, or `excursion' measure  on the space $\mathbb{D}([0,\infty), \mathbb{R})$  which satisfies
\[
\mathbb{Q}(1- {\rm e}^{-\theta \omega_t}) =- \frac{1}{x}\log\mathbb{E}_x({\rm e}^{-\theta X_t})= u_t(\theta),
\]
for $\theta,t\geq 0$. Here, the measure $\mathbb{Q}$ is the excursion measure  on the space $\mathbb{D}([0,\infty), \mathbb{R})$ associated to $\mathbb{P}_x$, $x>0$. See Theorems 3.10, 8.6 and 8.22 of \cite{Libook} and \cite{ElKR, LeG, DK, DL14,ZLimm} for further details. 
We can accordingly build a Poisson point process $N^{\rm c}$ on $[0,\infty)\times \mathbb{D}([0,\infty), \mathbb{R})$ with intensity ${\color{black} 2\beta\d s\otimes  \mathbb{Q}(\d \omega)}$. Then, for $x>0$, $(X, \mathbb{P}^\uparrow_x)$ is equal in law to the stochastic process
\begin{equation}
\Lambda_t = X'_t + \int_0^t \int_{\mathbb{D}([0,\infty), \mathbb{R})} \omega_{t-s} N^{\rm c}(\d s, \d \omega)+ \int_0^t \int_0^\infty \int_{\mathbb{D}([0,\infty), \mathbb{R})} \omega_{t-s} N^*(\d s, \d r, \d \omega), \qquad t\geq 0,
\label{N*}
\end{equation}
where $X'$ has the law $\mathbb{P}_x$ and is independent of $N^{\rm c}$ and $N^*$, which are also independent of one another.
Intuitively, one can think of the process $(\Lambda_t, t\geq 0)$ as being the result of first running a subordinator 
\[
S_t = 2\beta t + \int_0^t \int_0^\infty r N^*(\d s,\d r), \qquad t\geq 0,
\]
where we have slightly abused our notation and written $N^*(\d s,\d r)$, $s, r>0$ in  place of $\int_{ \mathbb{D}([0,\infty),\mathbb{R} )} N^*(\d s,\d r, \d \omega )$, $s,r>0$.
The subordinator $(S_t, t\geq0)$ is usually referred to as the {\it spine}.

To explain the formula \eqref{N*}, in a Poissonian way, we dress the spine with versions of $X$ sampled under the excursion measure $\mathbb{Q}$. Moreover,  at each jump of $S$ we initiate an independent copy of $X$ with initial mass equal to the size of the jump of $S$. 
See for example (3.9) in \cite{Li98}, (4.3) in \cite{Li96}, (4.18) in \cite{Li2001} or the discussion in Section 12.3.2 of \cite{Kbook} or \cite{Libook}.
The reader is also referred to e.g. \cite{RR} or \cite{Lam1, Lam2} for further details of the notion of  a spine.

It turns out that one may also identify the effect of the change of measure within the context of the SDE setting. In \cite{FF}, it was shown that $(X, \mathbb{P}^\uparrow_x)$, $x>0$, offers the unique  strong solution to the SDE 
\begin{align}\label{surv}
 X_t = x&+\alpha\int_0^t X_{s-}\d s+\sqrt{2\beta}\int_0^t\int_0^{X_{s-}} W(\d s,\d u)
+\int_0^t\int_0^\infty\int_0^{X_{s-}} r \tilde{N}(\d s,\d r,\d u) \notag\\
&+\int_0^t\int_0^\infty r N^*(\d s,\d r) + 2\beta t,\qquad t\geq0,
 \end{align}
 where $W$, $N$ and $\tilde{N}$ are as in (\ref{eqflow}) and $N^*$ is as above, and all noises are independent.
 See also \cite{DL} and \cite{FL}.

\medskip

\noindent{\bf Skeletal path decomposition of supercritical CSBPs.} In \cite{DW, BFM} and \cite{BKM} it was shown  that the law of the process $X$, where $X$ is defined by \eqref{eqflow}, can be recovered from a supercritical continuous-time Galton--Watson process (GW), issued with a Poisson number of initial ancestors, and dressed in a Poissonian way using the law of the original process conditioned to become extinguished.

To be more precise, they showed that for each $x\geq 0$, $(X, \mathbb{P}_x)$  has the same law as the  process $(\Lambda_t, t \geq 0)$ which has the following pathwise construction.
First sample from a continuous-time Galton--Watson process with  branching rate $q = \psi'(\lambda^*)$ and offspring distribution $\{p_k: k\geq 0\}$ such that its  branching generator is given by
\begin{equation}
 q\left(\sum_{k\geq 0} p_k r^k - r\right) =  \frac{1}{\lambda^*}\psi(\lambda^*(1-r)), \qquad r\in[0,1].
\label{F}
\end{equation}
This continuous-time Galton--Watson process goes by the name of the {\it skeleton}  and offers the genealogy of {\it prolific individuals}, that is, individuals who have infinite genealogical lines of descent (cf. \cite{BFM}).
With the particular branching generator given by \eqref{F}, $p_0 = p_1 =0$, and for $k\geq 2$,     $p_k : = p_k ([0,\infty))$, where for $r\geq 0$,
\[
p_k({\rm d}r) =  \frac{1}{\lambda^* \psi'(\lambda^*)}\left\{\beta (\lambda^*)^2\delta_0({\rm d}r)\mathbf{1}_{\{k=2\}} + (\lambda^*)^k \frac{r^k}{k!} {\rm e}^{-\lambda^*r} \Pi({\rm d}r)\right\}.
\]
If we denote the aforesaid GW process by $Z = (Z_t, t\geq 0)$ then we shall also insist that $Z_0$ has a Poisson distribution with parameter $\lambda^*x$.
Next,  thinking of the trajectory of $Z$ as a graph, {\it dress} the life-lengths of $Z$ in such a way that a $\psi^*$-CSBP is independently grafted on to each edge of $Z$ at time $t$ with rate
\begin{equation}
2\beta {\rm d}\mathbb{Q}^* + \int_0^\infty y {\rm e}^{-\lambda^* y}\Pi({\rm d}y){\rm d}\mathbb{P}^*_y.
\label{dressingrate}
\end{equation}
Moreover, on the event that an individual dies and branches into $k\geq 2$ offspring, with probability $p_k({\rm d}x)$, an additional independent $\psi^*$-CSBP is grafted on to the branching point with initial mass $x\geq 0$. The quantity $\Lambda_t$ is now understood to be the total dressed mass present at time $t$ together with the mass present at time $t$ in an independent $\psi^*$-CSBP issued at time zero with initial mass $x$. Whilst it is clear that the pair $(Z,\Lambda)$ is Markovian, it is less clear that $\Lambda$ alone is Markovian. This must, however,   be the case given the conclusion that   $\Lambda$ and $X$ are equal in law. A key element in this respect is the non-trivial observation that, for each $t\geq 0$, the law of $Z_t$ given $\Lambda_t$ is that of a  Poisson  random variable
 with parameter $\lambda^*\Lambda_t$.

Such skeletal path decompositions for continuous-state branching processes, and spatial versions thereof, are by no means new. Examples include \cite{EO, SV1, SV2, EW, DW, BKM, HHK, KR, KPR}. 

\bigskip

In this paper our objective is to understand the relationship between the skeletal decompositions of the type described above  and the emergence of a spine on conditioning the process to survive. In particular, our tool of choice will be the use of SDE theory. 
The importance of this study is that it underlines a methodology that should carry over to the spatial  setting of superprocesses,  where recent results have increasingly sought the use of skeletal decompositions to transfer results from the branching particle setting to the setting of measure valued processes; cf. \cite{KM-SP, eckhoff, piotr, P}. In future work we hope to develop the SDE approach to skeletal decompositions in the spatial setting.  We also expect  this approach to be helpful in studying analogous decompositions in the setting of continuous state branching processes with competition  \cite{BFF,P}.
Moreover, although our method takes inspiration from the genealogical coding of CSBPs by L\'evy excursions, cf. Duquesne and LeGall \cite{DLG}, our approach appears to be applicable where the aforesaid method fails, namely supercritical processes.

\section{Main results}

In this section we summarise the main results of the paper. We have three main results. First, we provide a slightly more general family of skeletal decompositions in the spirit of \cite{DW}, albeit with milder assumptions and  that we use the language of SDEs. Second, taking lessons from this first result, we give a time-inhomogeneous skeletal decomposition, again using the language of SDEs, {\color{black} both for supercritical and (sub)critical CSBPs}. Nonetheless, our proof will take inspiration from classical ideas on the genealogical coding of CSBPs through the exploration of associated excursions of reflected L\'evy processes; see for example \cite{DLG} and the references therein. Finally, our third main result, shows that a straightforward limiting procedure in the SDE skeletal decomposition for  (sub)critical processes, which corresponds to conditioning on survival, reveals a weak solution to the SDE given in \eqref{surv}.  It will transpire that conditioning the process to survive until later and later times is equivalent to ``thinning'' the skeleton such that, in the limit,  we get the spine decomposition. The limiting procedure also intuitively explains how the spine emerges in the conditioned process as a consequence of stretching out the skeleton in the SDE decomposition of the (sub)critical processes.
\medskip

Before moving to the first main result,  let us introduce some more notation. The reader will note that it is very similar but, nonetheless, subtly different to previously introduced terms.
Define the Esscher transformed branching mechanism $\psi_\lambda:\mathbb{R}_+\rightarrow \mathbb{R}_+$ for $\theta\geq -\lambda$ and $\lambda\geq \lambda^*$ by
\begin{equation}
\psi_\lambda(\theta)=\psi(\theta+\lambda)-\psi(\lambda)=\psi'(\lambda)\theta+\beta\theta^2+\int_{(0,\infty)}\left({\rm e}^{-\theta x}-1+\theta x\right){\rm e}^{-\lambda x}\Pi(\d x),
\label{psicomp}
\end{equation}
where
\begin{equation*}
\psi'(\lambda)=-\alpha+2\lambda\beta+\int_{(0,\infty)}\left( 1-{\rm e}^{-\lambda x}\right)x\Pi(\d x)>0.
\end{equation*}
This is the branching mechanism of a subcritical branching process on account of the fact that $-\psi'_\lambda(0+) = -\psi'(\lambda)<0$.
{\color{black} Heuristically speaking, given that $\lambda\mapsto \psi'(\lambda)$ is increasing, the $\psi_\lambda$-CSBP becomes more and more subcritical as $\lambda$ increases.}

Next, we need the continuous time Galton Watson process parameterised by $\lambda\geq \lambda^*$, which has been seen before in e.g. \cite{DW} and agrees with the process described by \eqref{F} when $\lambda = \lambda^*$.  It branches at rate $\psi'(\lambda)$ and has  branching generator
given by 
\[
F_\lambda(s): =
\lambda^{-1} \psi((1-s)\lambda)
,\qquad  s\in[0,1], \lambda\geq \lambda^*.
\] That is to say,   writing $F_\lambda(s)$ as in the left-hand side of \eqref{F}, we now have $p_0={\psi(\lambda)}/{\lambda \psi'(\lambda)}$, $p_1=0$ and for $k\geq 2$,
\begin{equation*}
p_k=\frac{1}{\lambda\psi'(\lambda)}\left\lbrace \beta \lambda^2\mathbf{1}_{\{k=2\}}+\int_{(0,\infty)}\frac{(\lambda r)^k}{k!}{\rm e}^{-\lambda r}\Pi(\d r)\right\rbrace.
\end{equation*}
We will also use the family $(\eta_k(\cdot))_{k\geq 0}$ of  branch point immigration laws (conditional on the number of offspring at the branch point), where $\eta_1(\d r)=0$, $x\geq0$, and, otherwise, 
\begin{equation}
\label{etalambda}
\eta_k(\d r) =\frac{1}{p_k\lambda \psi'(\lambda)}\left\{ \psi(\lambda)\mathbf{1}_{\{ k=0\}}\delta_0(\d r)+\beta \lambda^2 \mathbf{1}_{\{ k=2\}}\delta_0(\d r)  +\mathbf{1}_{\{ k\geq 2\}} \frac{(\lambda r)^k}{k!} {\rm e}^{-\lambda r}\Pi(\d r)  \right\}, 
\end{equation}
for $r\geq 0$. Note in particular that, when $\lambda > \lambda^*$, there is the possibility that no offspring are allowed.
{\color{black} Since in this case some lines of descent are finite, the Galton-Watson process no longer represents the prolific individuals.}

Finally, we need to introduce a  series of driving sources of randomness for the SDE which will appear in Theorem \ref{coupled} below.  Let {\rm$   {\texttt N}^{0}$} be a Poisson random measure  on $[0, \infty)^{3}$ with intensity measure $\d s\otimes {\rm e}^{-\lambda r} \Pi(\d r) \otimes {\rm d}\nu$, {\rm  $\tilde{\texttt N}^0$} be the associated compensated version of {\rm ${\texttt N}^0$}, {\rm $ {\texttt N}^1(\d s,\d r, \d{j}) $}  be a Poisson point process on  $[0, \infty)^{2}\times \NN
 $ with intensity 
$  \d s\otimes r  {\rm e}^{-\lambda r} \Pi(\d r) \otimes\sharp(\d{j}),$ and finally let {\rm  ${\texttt N}^{2}(\d s,\d r, \d{k},\d{j})$} be a Poisson point process on  $[0, \infty)^{2}\times \NN_0\times\mathbb{N}
 $ with intensity 
$  \psi'(\lambda)\d s\otimes \eta_k(\d r)  \otimes p_k\sharp(\d k)\otimes \sharp(\d{j})
 $,  where $\mathbb{N}_0 = \{0\}\cup\mathbb{N}$ and  $\sharp(\d\ell)= \sum_{i\in \mathbb{N}_0}\delta_{i}(\d \ell)$, $\ell\geq 0$, denotes the counting measure on $\mathbb{N}_0$.  
As before $W({\rm d}s, {\rm d}u)$  will denote a white noise process 
on $(0, \infty)^{2}$ based on the Lebesgue measure ${\rm d}s\otimes {\rm d}u$.

\begin{theorem}\label{coupled} Suppose that $\psi$ corresponds to a supercritical branching mechanism (i.e. $\alpha>0$) and $\lambda\geq \lambda^*$. Consider the coupled system of SDEs 
 {\rm \begin{align}
 \left(
\begin{array}{l}
\Lambda_t\\
 Z_t \\
\end{array}
\right)   =    & \, \left(
\begin{array}{l}
\Lambda_0 \\
 Z_0 \\
\end{array}
\right) -\psi'(\lambda) \int_{0}^{t}
 \left( \begin{array}{l}
\Lambda_{s-} \\
 0 \\
\end{array}
\right) \d s
                      + \sqrt{2\beta}  \int_{0}^{t}\int_{0}^{\Lambda_{s-}} \left( \begin{array}{l}
1 \\
 0 \\
\end{array}
\right)
 W(\d s,\d u) \notag\\
 &  + \int_{0}^{t}\int_{0}^{\infty}\int_{0}^{ \Lambda_{s-}} \left( \begin{array}{l}
r  \\
 0 \\
\end{array}
\right)
  \tilde{{\texttt N}}^{0}(\d s, \d r, {\rm d}\nu)  \notag \\
                      &\,   + \int_{0}^{t}\int_{0}^{\infty}\int_{1}^{Z_{s -}}  \left( \begin{array}{l}
r \\
 0 \\
\end{array}
\right){\texttt N}^1(\d s,\d r,\d{j})
\notag\\                     &\,  
      + \int_{0}^{t}\int_{0}^{\infty}\int_{0}^{\infty}\int_{1}^{Z_{s- }} \left( \begin{array}{l}
\quad r \\
 k-1 \\
\end{array}
\right) {\texttt N}^{2}(\d s,\d r,\d{k},\d{j})
\notag\\                     &\,  
   + 2\beta  \int_{0}^{t}   \left( \begin{array}{l}
Z_{s-} \\
 0 \\
\end{array}
\right) \d s, \qquad t\geq 0.              
                          \label{coupledSDE}                          
    \end{align}}
The equation  \eqref{coupledSDE} has a unique strong solution for arbitrary ($\mathcal{F}_0$-measurable) initial values $\Lambda_0\geq 0$ and $Z_0\in \mathbb{N}_0$ (where $\mathcal{F}_t:=\sigma((\Lambda_s,Z_s):s\leq t)$).
Furthermore, under the assumption that $Z_0$ is an independent random variable which is Poisson distributed with intensity $\lambda \Lambda_0$ this unique solution satisfies the following:
\begin{description}
\item[(i)] For $t\geq0$,  conditional on $\mathcal{F}^\Lambda_t: = \sigma(\Lambda_s: s\leq t)$, $Z_t $ is Poisson distributed with intensity $\lambda \Lambda_t$;
\item[(ii)] 
The process $(\Lambda_t, t\geq 0)$ is Markovian and   a weak solution to \eqref{eqflow};
\item[(iii)] If $Z_0=0$, then $(\Lambda_t,t\geq 0)$ is a subcritical CSBP with branching mechanism $\psi_{\lambda}$.

\end{description}
\end{theorem}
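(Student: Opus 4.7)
My plan is to establish strong existence and uniqueness of \eqref{coupledSDE}, and then derive (i) and (ii) simultaneously via a joint Laplace-transform computation based on It\^o's formula.

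\emph{Strong existence and uniqueness.} The system \eqref{coupledSDE} is of Dawson--Li type. The $\Lambda$-component has linear drift, $\sqrt{2\beta\Lambda_{s-}}$ diffusion (handled by Yamada--Watanabe as in \cite{DL}), and compensated Poisson jumps with intensities linear in $(\Lambda,Z)$ and jump sizes integrable against $r\mathrm{e}^{-\lambda r}\Pi(\mathrm{d}r)$. The $Z$-component is integer-valued with total event rate $\psi'(\lambda)Z_{s-}$ and finite mean offspring $\sum_{k\ge 0}k\,p_k$; it does not explode. Pathwise uniqueness and strong existence follow from the usual $L^1$-Gronwall estimate applied simultaneously to the two coordinates, adapting Fu--Li \cite{FL}.

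\emph{Joint Laplace transform.} For a fixed terminal time $t$, a bounded continuous test weight $g\ge 0$, and $u,\theta\ge 0$, set $a(\theta):=\lambda(1-\mathrm{e}^{-\theta})$ and consider the candidate
\[
M_s \;:=\; \exp\!\Bigl(-v_s\Lambda_s - \theta_s Z_s - \int_0^s g(r)\Lambda_r\,\mathrm{d}r\Bigr), \qquad 0\le s\le t,
\]
with $v_s,\theta_s$ deterministic, $v_t=u$ and $\theta_t=\theta$. Applying It\^o and computing the generator of the coupled process on $f(\ell,z)=\mathrm{e}^{-u\ell-\theta z}$ shows the $\ell$-coefficient of $\mathcal{L}f/f$ equals $\psi_\lambda(u)=\psi(u+\lambda)-\psi(\lambda)$, immediate from \eqref{psicomp}; while the $z$-coefficient, after inserting \eqref{etalambda} and using $\sum_{k\ge 2}(\mathrm{e}^{-\theta}\lambda r)^{k}/k!=\mathrm{e}^{\mathrm{e}^{-\theta}\lambda r}-1-\mathrm{e}^{-\theta}\lambda r$, simplifies to $\lambda^{-1}\mathrm{e}^{\theta}\bigl[\psi(u+a(\theta))-\psi_\lambda(u)\bigr]$. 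Requiring $M$ to be a local martingale then gives the backward system
\[
\partial_s v_s = \psi_\lambda(v_s) - g(s), \qquad \partial_s\theta_s = \lambda^{-1}\mathrm{e}^{\theta_s}\bigl[\psi(v_s+a(\theta_s)) - \psi_\lambda(v_s)\bigr],
\]
and the change of variable $w_s:=v_s+a(\theta_s)$ decouples this to the $\psi$-CSBP exponential-functional equation $\partial_s w_s = \psi(w_s) - g(s)$, $w_t=u+a(\theta)$, which is the natural extension of \eqref{int-equa}.

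\emph{Conclusion and main obstacle.} The Poisson initial condition $Z_0\mid\Lambda_0\sim\mathrm{Poisson}(\lambda\Lambda_0)$ gives $\mathbb{E}[M_0\mid\Lambda_0]=\mathrm{e}^{-\Lambda_0 w_0}$, hence the martingale property yields
\[
\mathbb{E}\!\left[\mathrm{e}^{-u\Lambda_t-\theta Z_t - \int_0^t g(r)\Lambda_r\,\mathrm{d}r}\,\Big|\,\Lambda_0\right] \;=\; \mathrm{e}^{-\Lambda_0\, w_0},
\]
so the joint Laplace functional of $(Z_t,(\Lambda_s)_{s\le t})$ agrees with that of $(K,(X_s)_{s\le t})$, where $X$ is a $\psi$-CSBP issued from $\Lambda_0$ and $K\mid X_t\sim\mathrm{Poisson}(\lambda X_t)$. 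Specialising $g\equiv 0$ and varying $(u,\theta)$ delivers the joint time-$t$ law; varying $g$ then identifies the whole $\Lambda$-path law with that of the $\psi$-CSBP, giving (ii) in the weak-solution sense, and identifies the conditional law of $Z_t$ given $\mathcal{F}^\Lambda_t$ as $\mathrm{Poisson}(\lambda\Lambda_t)$ by uniqueness of Laplace functionals, giving (i). The main technical obstacle is the algebraic identification of the $z$-coefficient: this cancellation packages the skeletal decomposition of \cite{DW,BKM} into the generator, and once it is in place the change of variable $w=v+a(\theta)$ together with the Poisson initial condition produce the standard $\psi$-CSBP evolution with essentially no further work.
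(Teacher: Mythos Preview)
Your proposal is correct and rests on the same algebraic core as the paper---the substitution $w=v+\lambda(1-\mathrm{e}^{-\theta})$ (in the paper's notation, $\kappa=\eta+\lambda(1-\mathrm{e}^{-\theta})$) that collapses the two-variable exponent to the $\psi$-CSBP exponent---but the execution is genuinely different. The paper applies It\^o's formula to $\mathrm{e}^{-\eta\Lambda_t-\theta Z_t}$, takes expectations to derive a \emph{forward} linear PDE \eqref{QL} for $f_t(\eta,\theta)=\mathbf{E}_x[\mathrm{e}^{-\eta\Lambda_t-\theta Z_t}]$, matches it to the PDE \eqref{Ypde} for $g_t(\kappa)=\mathbb{E}_x[\mathrm{e}^{-\kappa X_t}]$ via the substitution, and then must bootstrap \emph{local} PDE uniqueness to a global statement by invoking the Markov and branching properties on successive time intervals. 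Your backward martingale/characteristics route avoids this detour: once the characteristic ODEs are solved and $M$ is seen to be a bounded (hence true) martingale---you should record that $v_s\ge 0$ and $0\le w_s-v_s<\lambda$ hold along the backward flow, which follows from convexity of $\psi$ and $\psi(\lambda)\ge 0$---the identity drops out for all $t$ in one step. Your inclusion of the additive functional $\int_0^t g(r)\Lambda_r\,\mathrm{d}r$ is a further streamlining: it pins down the full path law of $\Lambda$ (and hence (ii)) directly, whereas the paper obtains only time-$t$ marginals from the PDE and must argue separately that $\Lambda$ alone is Markov. For existence and uniqueness the paper is more hands-on than your appeal to Fu--Li: it observes that $Z$ is autonomous (determined by $\texttt{N}^2$ alone), builds a weak solution explicitly as an independent $\psi_\lambda$-CSBP plus a Poisson-dressed skeleton, and reduces pathwise uniqueness of $\Lambda$ to that of the $\psi_\lambda$-CSBP SDE; this is closer to the skeletal picture and sidesteps having to fit the integer-valued $Z$ into a general SDE framework.
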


If one focuses on the second element, $Z$, in the SDE \eqref{coupledSDE}, it can be seen that there is no dependency on the first element $\Lambda$. The converse is not true however. Indeed, the stochastic evolution for $Z$ is simply that of the continuous-time GW process with branching mechanism given by $F_\lambda(s)$, $s\in[0,1]$. Given the evolution of $Z$, the process $\Lambda$ here describes nothing more than the aggregation of a Poisson and branch-point dressing on $Z$ together with  an independent copy of a $\psi_\lambda$-CSBP. As is clear from \eqref{etalambda} this results in the skeleton $Z$ having the possibility of `dead ends' (no offspring). Of course if $\lambda = \lambda^*$ then this occurs with zero probability and the joint system of SDEs in (\ref{coupledSDE}) describes precisely the prolific skeleton decomposition.
In the spirit of \cite{DW}, albeit using different technology and in a continuum setting,  Theorem \ref{coupled} puts into a common   framework a parametric family of skeletal decompositions for supercritical processes. Related work also appears in \cite{AD, L}.

\begin{rem}\rm
Although we have assumed in the introduction that $\int_{(0,\infty)}(x\wedge x^2)\Pi({\rm d}x)<\infty$, the reader can verify from the proof that this is in fact not needed. Indeed, suppose that we relax the assumption on $\Pi$ to just $\int_{(0,\infty)}(1\wedge x^2)\Pi(\d x)<\infty$ and we take the branching mechanism in the form 
\[
\psi(\theta) = -\alpha \theta + \beta\theta^2 + \int_{(0,\infty )} ({\rm e}^{-\theta x} - 1 + \theta x\mathbf{1}_{\{x<1\}})\Pi({\rm d}x),\,\, \theta \geq 0,
\]
where $\psi'(0)<0$ and
\[
\int_{0+}\frac{1}{|\psi(\xi)|}{\rm d}\xi =\infty
\]
 to ensure conservative supercriticality. Then 
the  necessary adjustment one needs to make occurs, for example, in \eqref{eqflow}, where jumps of size greater than equal to 1 in the Poisson random measures ${N}$   is separated out without compensation. However, the form of \eqref{coupledSDE} remains the same as all jumps of $N^0$ can be compensated.
\end{rem}

 Our objective, however, is to go further and demonstrate how the SDE approach can also apply in the finite horizon setting. We do this below,
 {\color{black} but} we should remark that the skeletal decomposition is heavily motivated by the description of the CSBP genealogy using the so-called height process  in Duquesne and Le Gall \cite{DLG}. Indeed, {\color{black} for (sub)critical CSBPs} one may consider the conclusion of Theorem \ref{T}, below, as a rewording thereof.
 {\color{black} However, as the proof does not rely on the CSBP being (sub)critical, the same result holds in the supercritical case. Thus Theorem \ref{T} is also a time-inhomogeneous version of Theorem \ref{coupled} for supercritical CSBPs, which setting was not discussed in \cite{DLG}. }
 
{\color{black} Assume that $\psi$ is a branching mechanism that satisfies Grey's condition \eqref{grey}.}
  We fix a time marker $T>0$ and we want to describe a coupled system of SDEs in the spirit of \eqref{coupledSDE} in which the second component describes prolific genealogies to the time horizon $T$. 
  {\color{black} In other words, our aim is to provide an SDE decomposition of the CSBP along those individuals in the population who have a descendent at time $T$.}

  To this end, recall that $(u_t(\theta), t\geq 0)$ is given by \eqref{semigroup} and accordingly, for $t\geq 0$, $u_t(\infty) = -x^{-1}\log \mathbb{P}_x(X_t=0) $ gives the rate at which extinction has occurred by time $t$.
We need a Poisson random measure  $\texttt{N}^{0}_T$ on $[0,T)\times [0, \infty)^{2}$ with intensity  ${\rm d}s\otimes {\rm e}^{-u_{T-s}(\infty) r} \Pi({\rm d}r) \otimes {\rm d}\nu$,
 a Poisson process $ {\texttt N}^1_T$  on  $[0,T)\times [0, \infty)\times \NN_0
 $ with intensity 
 ${\rm d} s\otimes r  {\rm e}^{-u_{T-s}(\infty) r} \Pi({\rm d}r) \otimes\sharp({\rm d}j),$  and
 a Poisson process $ {\texttt N}^{2}_T({\rm d}s,{\rm d}r,{\rm d}k,{\rm d}j)$   on  $[0,T)\times[0, \infty)\times \NN_0\times\mathbb{N}$ with intensity 
 \begin{align*}
&  \left\{\frac{u_{T-s}(\infty)\psi'(u_{T-s}(\infty))-\psi(u_{T-s}(\infty))}{u_{T-s}(\infty)}  \right\}{\rm d}s\otimes   \eta_k^{T-s}({\rm d}r)\otimes p^{T-s}_k\sharp({\rm d}k)\otimes \sharp({\rm d}j),
 \end{align*}
 where, for $k\geq 2$, 
 \begin{equation}
 \eta_k^{T-s}({\rm d}r)= \frac{ \beta u_{T-s}^2(\infty)\mathbf{1}_{\{k=2\}}\delta_0({\rm d}r)+\left( u_{T-s}(\infty)r\right)^k {\rm e}^{-u_{T-s}(\infty) r}\Pi({\rm d}r)/k!}{p^{T-s}_k \left(u_{T-s}(\infty)\psi'(u_{T-s}(\infty))-\psi(u_{T-s}(\infty))\right)}, \qquad r\geq 0,
 \label{timeeta}
 \end{equation}
and $p^{T-s}_k$ is such that $p^{T-s}_0 =p^{T-s}_1 = 0$ and the remaining probabilities are computable by insisting that $\eta_k^{T-s}(\cdot)$ is itself a probability distribution for each $k\geq2$.

\begin{theorem}\label{T} Suppose that $\psi$ corresponds to a branching mechanism which satisfies Grey's condition \eqref{grey}. Fix a time horizon $T>0$ and 
consider the  coupled system of SDEs 
{\rm  \begin{align}
 \left(
\begin{array}{l}
\Lambda^T_t\\
 Z^T_t \\
\end{array}
\right)   =    & \, \left(
\begin{array}{l}
\Lambda^T_0 \\
 Z^T_0 \\
\end{array}
\right)  -  \int_{0}^{t}\psi'(u_{T-s}(\infty))
 \left( \begin{array}{l}
\Lambda^T_{s-} \\
 0 \\
\end{array}
\right) \d s
                      + \sqrt{2\beta}  \int_{0}^{t}\int_{0}^{\Lambda^T_{s-}} \left( \begin{array}{l}
1 \\
 0 \\
\end{array}
\right)
 W(\d s,\d u) \notag\\
 &  + \int_{0}^{t}\int_{0}^{\infty}\int_{0}^{ \Lambda^T_{s-}} \left( \begin{array}{l}
r  \\
 0 \\
\end{array}
\right)
  \tilde{\texttt N}_T^{0}(\d s, \d r, {\rm d}\nu)  \notag \\
                      &\,   + \int_{0}^{t}\int_{0}^{\infty}\int_{1}^{Z^T_{s -}}  \left( \begin{array}{l}
r \\
 0 \\
\end{array}
\right){\texttt N}_T^1(\d s,\d r,\d{j})
\notag\\                     &\,  
      + \int_{0}^{t}\int_{0}^{\infty}\int_{0}^{\infty}\int_{1}^{Z^T_{s- }} \left( \begin{array}{l}
\quad r \\
 k-1 \\
\end{array}
\right) {\texttt N}_T^{2}(\d s,\d r,\d{k},\d{j})
\notag\\                     &\,  
   + 2\beta  \int_{0}^{t}   \left( \begin{array}{l}
Z^T_{s-} \\
 0 \\
\end{array}
\right) \d s, \qquad 0\leq t<T.                     
                          \label{TcoupledSDE}
    \end{align}}
    The equation  \eqref{TcoupledSDE} has a unique strong solution for arbitrary ($\mathcal{F}_0^T$-measurable) initial values $\Lambda_0^T\geq 0$ and $Z_0^T\in \mathbb{N}_0$ (where $\mathcal{F}_t^T:=\sigma((\Lambda_s^T,Z_s^T):s\leq t),t<T$).
Furthermore, under the assumption that $Z_0^T$ is an independent random variable which is Poisson distributed with intensity $u_T(\infty) \Lambda^T_0$ this unique solution satisfies the following: 
\begin{description}
\item[(i)] For $T>t\geq0$, conditional on $\mathcal{F}^{\Lambda^T}_t: = \sigma(\Lambda^T_s: s\leq t)$,  $Z^T_t $ is Poisson distributed with intensity $u_{T-t}(\infty)\Lambda^T_t$;
\item[(ii)] 
The process $(\Lambda^T_t, 0\leq t<T)$ is Markovian and a weak solution to \eqref{eqflow};  

\item[(iii)] Conditional on $\{Z^T_0=0\}$, the process $(\Lambda^T_t, 0\leq t<T)$ corresponds to a weak solution to \eqref{eqflow} conditioned to become extinct by  time $T$.
\end{description}

\end{theorem}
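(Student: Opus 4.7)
My plan is to mimic, in the time-inhomogeneous setting, the strategy used for Theorem \ref{coupled}, replacing the constant Esscher parameter $\lambda$ by the space-time harmonic function $u_{T-s}(\infty)$, which (thanks to Grey's condition) is finite for every $s<T$ and satisfies the ODE
\begin{equation*}
\frac{\d}{\d t}u_{T-t}(\infty)=\psi(u_{T-t}(\infty)), \qquad 0\leq t<T,
\end{equation*}
obtained by differentiating \eqref{int-equa} and passing to the limit $\theta\to\infty$. Strong existence and pathwise uniqueness for \eqref{TcoupledSDE} follow by the same Dawson--Li-type arguments used for \eqref{eqflow}: on any compact sub-interval of $[0,T)$ the coefficients of the $\Lambda^T$-component are non-Lipschitz only in the standard square-root/linear-in-intensity way already handled in \cite{DL}, while the $Z^T$-component is a pure-jump integer-valued process whose rates are locally bounded and hence non-explosive, so classical Picard--type arguments apply coordinate by coordinate.

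\textbf{Part (i): conditional Poisson structure.} The key computation is to show that for every $\theta\geq 0$ and every $0\le t<T$,
\begin{equation*}
\E\bigl[{\rm e}^{-\theta Z^T_t}\,\big|\,\mathcal{F}^{\Lambda^T}_t\bigr]=\exp\!\bigl\{-u_{T-t}(\infty)(1-{\rm e}^{-\theta})\Lambda^T_t\bigr\}.
\end{equation*}
I would set $v(t):=u_{T-t}(\infty)(1-{\rm e}^{-\theta})$ and apply It\^o's formula to the process $\exp\{-\theta Z^T_t-v(t)\Lambda^T_t\}$, collecting the drift contributions from (a) the $-\psi'(u_{T-s}(\infty))\Lambda^T$ drift, (b) the Brownian term $\sqrt{2\beta}\int_0^{\Lambda^T}W$, (c) the compensated $\tilde{\texttt N}^0_T$ integral, (d) the branching jumps of $Z^T$ driven by $\texttt N^2_T$ (and the accompanying $\Lambda^T$-jumps of size $r$), (e) the immigration $\texttt N^1_T$, and (f) the continuous immigration $2\beta Z_{s-}\d s$. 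Using the definition of $\eta^{T-s}_k$ and $p^{T-s}_k$ in \eqref{timeeta}, the branch-point contribution in (d) collapses to a Laplace-exponent-type expression in which $\theta$ and $v(t)$ appear through $1-{\rm e}^{-\theta}$, and after summation with (a)--(f) the drift of the exponential simplifies, via the ODE $\dot u_{T-t}(\infty)=\psi(u_{T-t}(\infty))$, to zero. Hence the exponential is a local (then true) martingale, and evaluating at $t=0$ using the assumed Poisson law of $Z^T_0$ with parameter $u_T(\infty)\Lambda^T_0$ yields the claim. This martingale calculation is the heart of the proof and, in my view, the main obstacle: one must be careful to match the Esscher weight $\e^{-u_{T-s}(\infty)r}$ present in the intensities of $\texttt N^0_T,\texttt N^1_T$ against the jump sizes $r$ and, in the branch-point term, against the $(k-1)$-jump in $Z^T$.

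\textbf{Part (ii): marginal is a weak solution to \eqref{eqflow}.} Using (i), I would compute $\E[{\rm e}^{-\theta \Lambda^T_t}\mid\Lambda^T_0]$ by conditioning on $\mathcal{F}^{\Lambda^T}_t$ inside a joint exponential and marginalising the Poisson $Z^T_t$ via (i); this gives a Laplace functional that solves the same evolution equation \eqref{semigroup}--\eqref{int-equa} as the $\psi$-CSBP. Equivalently, one may verify the equation directly at the level of infinitesimal generators: integrating the jump contributions from $\texttt N^1_T$ and $\texttt N^2_T$ against the Poisson$(u_{T-s}(\infty)\Lambda^T_{s-})$ law of $Z^T_{s-}$ recombines, via the identity $r\e^{-ur}\Pi(\d r)$ plus the $\eta^{T-s}_k$-mixture, with the compensated $\tilde{\texttt N}^0_T$-integral to reproduce the full compensated Poisson integral $\int_0^\infty\int_0^{\Lambda^T_{s-}}r\tilde N(\d s,\d r,\d u)$ against $\Pi(\d r)$; the deterministic drifts and the Gaussian piece are already correct. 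This yields weak solvability to \eqref{eqflow}.

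\textbf{Part (iii): extinction by time $T$.} On the event $\{Z^T_0=0\}$, the $Z^T$-equation in \eqref{TcoupledSDE} has no forcing (all jump integrals are $\int_1^{Z^T_{s-}}\cdots=0$), hence $Z^T_t\equiv 0$ for all $0\le t<T$. The $\Lambda^T$-equation then reduces to
\begin{equation*}
\Lambda^T_t=\Lambda^T_0-\int_0^t\psi'(u_{T-s}(\infty))\Lambda^T_{s-}\d s+\sqrt{2\beta}\int_0^t\!\!\int_0^{\Lambda^T_{s-}}\!\!W(\d s,\d u)+\int_0^t\!\!\int_0^\infty\!\!\int_0^{\Lambda^T_{s-}}r\,\tilde{\texttt N}^0_T(\d s,\d r,\d\nu),
\end{equation*}
which is a time-inhomogeneous CSBP-type SDE with instantaneous branching mechanism $\theta\mapsto\psi(\theta+u_{T-s}(\infty))-\psi(u_{T-s}(\infty))$, cf.\ \eqref{psicomp}. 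Computing its Laplace functional $\E[\exp(-\theta\Lambda^T_t)\mid\Lambda^T_0=x]$ by the standard argument yields $\exp(-x[u_t(\theta+u_{T-t}(\infty))-u_{T-t}(\infty)])$, which is precisely $\mathbb{E}_x[{\rm e}^{-\theta X_t}\mathbf 1_{\{X_T=0\}}]/\mathbb{P}_x(X_T=0)$ since $\mathbb{P}_x(X_T=0)={\rm e}^{-x u_T(\infty)}$ and the Markov property of $X$ under $\mathbb{P}_x$ together with $\mathbb{P}_y(X_{T-t}=0)={\rm e}^{-y u_{T-t}(\infty)}$ gives the same ratio. This identifies $(\Lambda^T,\mathbb{P}\,\vert\,Z^T_0=0)$ with $X$ under $\mathbb{P}_{\Lambda^T_0}(\,\cdot\,\vert\,X_T=0)$, as claimed.
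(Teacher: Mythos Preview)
Your overall plan mirrors the paper's, but Part~(i) contains a genuine gap. You claim that with $v(t)=u_{T-t}(\infty)(1-e^{-\theta})$ the process $M_t=\exp\{-\theta Z^T_t-v(t)\Lambda^T_t\}$ is a (local) martingale. It is not. Writing $u=u_{T-t}(\infty)$, $v=u(1-e^{-\theta})$ and using $\dot v=\psi(u)(1-e^{-\theta})$, the $\Lambda^T$-coefficient in the drift of $M$ collects to
\[
-\psi(u)(1-e^{-\theta})+v\psi'(u)+\beta v^2+\int_0^\infty(e^{-vr}-1+vr)e^{-ur}\Pi(\d r)
=\psi(u+v)-(2-e^{-\theta})\psi(u),
\]
i.e.\ $\psi\bigl(u(2-e^{-\theta})\bigr)-(2-e^{-\theta})\psi(u)$, which vanishes only if $\psi$ is linear. (The same obstruction already arises in the homogeneous case of Theorem~\ref{coupled}: there the paper's identity \eqref{doitforT} gives $\mathbf E_x[e^{-\theta Z_t-\lambda(1-e^{-\theta})\Lambda_t}]=e^{-x u_t(2\lambda(1-e^{-\theta}))}$, which is not constant in $t$.) Even if $M$ \emph{were} a martingale in the joint filtration, that would not by itself yield $\E[e^{-\theta Z^T_t}\mid\mathcal F^{\Lambda^T}_t]=e^{-v(t)\Lambda^T_t}$: a martingale identity gives expectations, not conditional expectations on the sub-filtration $\mathcal F^{\Lambda^T}_t$.

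The paper's route avoids both problems: it applies It\^o's formula to the \emph{two}-parameter exponential $e^{-\eta\Lambda_t-\theta Z_t}$ and derives a first-order PDE for $f_t(\eta,\theta):=\mathbf E_x[e^{-\eta\Lambda_t-\theta Z_t}]$, then checks (by a direct algebraic identity on $\psi$) that the CSBP functional $g_t(\kappa)=e^{-x u_t(\kappa)}$ with $\kappa=\eta+u_{T-t}(\infty)(1-e^{-\theta})$ solves the same PDE with the same initial datum. Local uniqueness for linear first-order PDEs then gives $f_t(\eta,\theta)=g_t(\kappa)$, from which one reads off simultaneously that $Z^T_t\mid\Lambda^T_t$ is Poisson$(u_{T-t}(\infty)\Lambda^T_t)$ and that $\Lambda^T$ has the $\psi$-CSBP one-dimensional marginals. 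The upgrade from conditioning on $\Lambda^T_t$ to conditioning on $\mathcal F^{\Lambda^T}_t$ then uses the joint Markov property together with the Poisson structure to first show that $\Lambda^T$ alone is Markov---a step you do not address. Your Part~(ii) depends on (i), so it inherits the gap.

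Your Part~(iii) is essentially correct and in fact slightly more direct than the paper's argument (which deduces it from (i)--(ii) via the calculation \eqref{Z=0}): you observe that on $\{Z^T_0=0\}$ the SDE collapses to the time-inhomogeneous $\psi_{u_{T-s}(\infty)}$-CSBP and identify its Laplace functional. One small slip: the exponent should be $u_t(\theta+u_{T-t}(\infty))-u_T(\infty)$, not $-u_{T-t}(\infty)$; cf.\ \eqref{V^T_t} and the semigroup identity $u_t(u_{T-t}(\infty))=u_T(\infty)$, which makes the exponent vanish at $\theta=0$ as it must.
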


The SDE evolution in Theorem \ref{T}  mimics the skeletal decomposition in (\ref{coupledSDE}), albeit that  the different components in the decomposition are time-dependent. Putting the SDE representation aside, such time-varying skeletons have been observed in e.g. \cite{EW, DLG}.
We note that the underlying skeleton $Z^T$ can be thought of as a time-inhomogenous Galton--Watson process (a $T$-prolific skeleton) such that, at time $s< T$, its branching rate is given by  
\begin{equation}
q^{T-s}:= \frac{u_{T-s}(\infty)\psi'(u_{T-s}(\infty))-\psi(u_{T-s}(\infty))}{u_{T-s}(\infty)}
\label{rate}
\end{equation}
and offspring distribution is given by $\{p^{T-s}_k: k\geq 0\}$. This has the feature that the branching rate explodes towards the time horizon $T$. To see why, we can appeal to  \eqref{semigroup}, and note that
\[
\mathbb{P}_x[X_t = 0] = {\rm e}^{-u_t(\infty)x}, \qquad x,t>0,
\]
and hence $\lim_{t\to0}u_t(\infty) = \infty$. Moreover, one easily verifies from (\ref{mechanism}) that 
$
\lim_{\lambda\to\infty}[\lambda\psi'(\lambda) - \psi(\lambda)]/\lambda = \infty.
$
 Together, these facts imply the explosion of \eqref{rate} as $s\to T$.  
 
 We also note from the integrals involving ${\texttt N}_T^1$ and ${\texttt N}_T^{2}$ that there is mass immigrating off the space-time  trajectory of $Z^T$. Moreover, once mass has immigrated, the first four terms of \eqref{TcoupledSDE} show that 
it evolves as a time-inhomogenous CSBP.

{\color{black} Note, that in the supercritical setting $u_{T-t}(\infty)$ converges to $\lambda^*$ for all $t>0$ as $T\rightarrow\infty$.
This intuitively means that when $T$ goes to $\infty$, one can recover the prolific skeleton decomposition of Theorem \ref{coupled} from the time-inhomogeneous one of Theorem \ref{T}.}

\medskip

Finally with the finite-horizon SDE skeletal decomposition in Theorem \ref{T}, we may now turn our attention to understanding what happens when we observe the solution {\color{black} to \eqref{TcoupledSDE} in the (sub)critical case } on a finite time horizon $[0,t_0]$, {\color{black} and} we condition on there being at least one $T$-prolific genealogy,  {\color{black} while letting } $T\rightarrow\infty$.

\begin{theorem}\label{skeletontospine} 
Suppose that $\psi$ is a critical or subcritical branching mechanism  such that Grey's condition \eqref{grey}  holds. Suppose, moreover, that $((\Lambda^T_t, Z^T_t), 0\leq t<T)$ is a weak solution to  \eqref{TcoupledSDE} and that $Z_0^T$ is an independent random variable which is Poisson distributed with intensity $u_T(\infty) \Lambda^T_0$. 
Then, conditional on the event  $Z_0^T>0$, in the sense of weak convergence with respect to the Skorokhod topology on  $\mathbb{D}([0,\infty),\mathbb{R}^2)$,  for all $t_0>0$,
\[
((\Lambda^T_t, Z^T_t), 0\leq t\leq t_0) \rightarrow ((X^\uparrow_t, 1), 0\leq t\leq  t_0),
\]
as $T\to\infty$,
where $X^\uparrow$  is a weak solution to \eqref{surv}.
\end{theorem}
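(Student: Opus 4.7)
The plan is to identify three key mechanisms that act in the limit $T\to\infty$: the conditional law of $Z_0^T$ given $\{Z_0^T>0\}$ degenerates to a point mass at $1$; the skeleton $Z^T$ admits no branching on any finite horizon $[0,t]$ with probability tending to $1$; and the coefficients driving the SDE \eqref{TcoupledSDE} for $\Lambda^T$ converge pointwise to those of \eqref{surv}. Combining these, every subsequential weak limit must coincide with a solution of \eqref{surv}, and weak uniqueness (established in \cite{FF}) then identifies it. Conceptually, $\{Z_0^T>0\}$ is the event that some $T$-prolific genealogy exists, i.e.\ that the population is not extinct by time $T$; letting $T\to\infty$ morally corresponds to the classical conditioning to survive encoded by the measure $\P^\uparrow$ in \eqref{staypos}.

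\smallskip

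\emph{Step 1.} Since $\psi$ is (sub)critical and satisfies Grey's condition, $u_T(\infty)\downarrow 0$ as $T\to\infty$ (the limit being the largest root of $\psi$ in $[0,\infty)$, namely $0$). Setting $\mu_T:=u_T(\infty)\Lambda_0^T$, the $\mathrm{Poisson}(\mu_T)$ variable $Z_0^T$ satisfies
\[
\P(Z_0^T=1\mid Z_0^T>0)=\frac{\mu_T\,\mathrm{e}^{-\mu_T}}{1-\mathrm{e}^{-\mu_T}}\longrightarrow 1.
\]
\emph{Step 2.} By \eqref{rate}, the branching rate of $Z^T$ at time $s$ is $q^{T-s}=\psi'(u_{T-s}(\infty))-\psi(u_{T-s}(\infty))/u_{T-s}(\infty)$, and since $\psi(\lambda)/\lambda\to\psi'(0+)$ as $\lambda\to 0$ (by $\psi(0)=0$), one gets $q^{T-s}\to 0$ and $\int_0^t q^{T-s}\d s\to 0$. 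Conditional on $\{Z_0^T=1\}$, with probability tending to $1$ the skeleton therefore satisfies $Z^T_s\equiv 1$ on $[0,t]$, so the ${\texttt N}^2_T$ contribution to \eqref{TcoupledSDE} disappears and the $j$ integration in the ${\texttt N}^1_T$ term reduces to the single index $j=1$. \emph{Step 3.} On this event, the remaining coefficients in \eqref{TcoupledSDE} converge: the drift $-\psi'(u_{T-s}(\infty))\to -\psi'(0+)=\alpha$; the kernel $\mathrm{e}^{-u_{T-s}(\infty)r}\Pi(\d r)$ of $\tilde{\texttt N}^0_T$ converges to $\Pi(\d r)$; the kernel $r\,\mathrm{e}^{-u_{T-s}(\infty)r}\Pi(\d r)$ governing the spinal immigration converges to $r\,\Pi(\d r)$; and the $2\beta$ immigration drift is unchanged. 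These coefficients are exactly those of \eqref{surv}.

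\smallskip

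The technical core is upgrading this coefficient convergence to weak convergence in $\mathbb{D}([0,t],\mathbb{R}^2)$. A standard tightness-plus-identification argument is natural: tightness of the laws of $(\Lambda^T,Z^T)_{s\in[0,t]}$ follows from Aldous's criterion together with moment bounds for the SDE coefficients (which remain uniformly controlled since $u_{T-s}(\infty)$ is bounded on $s\in[0,t]$ as $T\to\infty$), while every subsequential weak limit is identified with a solution of \eqref{surv} by passing to the limit in the associated martingale problem. The main obstacle is handling rigorously the stochastic integrals against the $T$-dependent Poisson random measures $\tilde{\texttt N}^0_T,{\texttt N}^1_T,{\texttt N}^2_T$, which live on $T$-varying probability spaces. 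A convenient device is a common coupling: realize all three measures by thinning a single $T$-independent Poisson random measure $N$ (as in \eqref{eqflow}) augmented with an independent spinal driving measure, so that the $T$-dependence is entirely encoded in the thinning probabilities $\mathrm{e}^{-u_{T-s}(\infty)r}\to 1$. Dominated convergence for the resulting compensated and un-compensated stochastic integrals, combined with weak uniqueness for \eqref{surv} from \cite{FF}, then delivers the claimed limit.
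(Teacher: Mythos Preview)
Your heuristic Steps~1--3 are correct and essentially reproduce the informal discussion the paper gives at the start of Section~\ref{proof3}: the conditional initial distribution degenerates to $\delta_1$, the branching rate $q^{T-s}\to 0$ so the skeleton freezes, and the remaining coefficients of \eqref{TcoupledSDE} relax to those of \eqref{surv}. The divergence is in how this is made rigorous.

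The paper does \emph{not} argue via tightness plus martingale-problem identification. Instead it invokes a semigroup convergence theorem (Ethier--Kurtz, Chapter~4, Theorem~2.5): once one shows that the time-augmented semigroups $\texttt{P}^T_t$ are Feller and converge in the sup norm of $C_0$ to a Feller limit $\texttt{P}^\uparrow_t$, Skorokhod convergence follows with no tightness check required. The key technical device is an explicit Laplace-functional representation obtained by conditioning on the jump times of $Z$ and applying Campbell's formula between jumps (formula~\eqref{jumpproduct}), which reduces the problem to elementary uniform estimates on deterministic quantities like $V^T_t(\gamma)$ and $\phi_{u_{T-s}(\infty)}$.

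Your route is plausible but is only an outline, and there are real obstacles you have not addressed. First, the moment bounds you invoke for Aldous's criterion may not exist: under the standing assumption $\int (r\wedge r^2)\Pi(\d r)<\infty$ one may have $\int r^2\Pi(\d r)=\infty$, in which case the spinal immigration has infinite first moment and second-moment tightness arguments break down. Second, the coupling device---thinning a single $N$ with $T$-dependent acceptance probabilities---does not by itself give convergence of the stochastic integrals, because the integrand $\mathbf{1}_{[0,\Lambda^T_{s-}]}(\nu)$ depends on the very solution whose convergence you are trying to establish; you would need an a priori stability estimate for the SDE, not just pointwise convergence of coefficients. Third, you have not said how to pass to the limit in the compensated integral against $\tilde{\texttt N}^0_T$, where both the compensator and the integrand vary with $T$. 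The paper's semigroup approach sidesteps all three issues because it works entirely with one-dimensional distributions expressed through explicit transforms.
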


Theorem \ref{skeletontospine} puts the phenomena of spines and skeletons in the same framework. Roughly speaking, any subcritical branching population contains a naturally embedded skeleton which describes the `fittest' genealogies. In our setting `fittest' means surviving until time $T$ but other notions of fitness can be considered, especially when one introduces a spatial type to mass in the branching process. For example in \cite{HHK} a  branching Brownian motion in a strip is considered, where `fittest genealogies' pertains to those lines of descent which survive in the strip for all eternity. Having at least one line of descent in the skeleton corresponds to the event of survival. Thus, conditioning on survival as we make the survival event itself increasingly unlikely, e.g. by taking $T\to\infty$  in our model or taking the width of the strip down to a critical value in the branching Brownian motion model, the natural stochastic behaviour of the skeleton is to thin down to a single line of decent. This phenomenon was originally observed in  \cite{EW}, where the  scaling limit of a Galton--Watson processes conditioned on survival is shown to converge to the immortal particle decomposition of the $(1+\beta)$-superprocess conditioned on survival. 

\bigskip

The remainder of the paper is structured as followed. In the next section we explain the heuristic behind how \eqref{eqflow} can be decoupled into components that arise in \eqref{coupledSDE}. The heuristic is used in Section \ref{proof1} where the proof of Theorem \ref{coupled} is given. In this sense our proof of Theorem \ref{coupled} has the feel of a `guess-and-verify' approach.
In Section \ref{explore}, again in the spirit of a `guess-and-verify' approach, we use ideas from the classical description of the exploration process of CSBPs in e.g. \cite{DLG} to provide the heuristic behind the mathematical structures that lie behind the proof of Theorem \ref{T}. Given the similarity of this proof to that of Theorem \ref{coupled}, it is sketched in Section \ref{proof2}. Finally in Section \ref{proof3} we provide the proof of Theorem \ref{skeletontospine}.

\section{Thinning of the CSBP SDE}\label{marks}

In this section, we will perform an initial manipulation of the SDE \eqref{eqflow}, which we will need in order to make comparative statements for Theorems \ref{coupled} and \ref{T}.
 To this end, we will introduce some independent marks on the atoms of the Poisson process $N$  driving \eqref{eqflow} and use them to thin out various contributions to the SDE evolution.

Denote by $(t_i,r_i,\nu_i: i\in \NN)$  some enumeration of the atoms of $N$ and recall that  $\mathbb{N}_0 = \{0\}\cup\mathbb{N}$.
By enlarging the probability space, we can introduce an additional mark to atoms of $N$, say  $({k}_i: i\in \NN)$, resulting in an  `extended' Poisson random measure,
 \begin{equation}\label{extendedPoisson}
 {\cal N}({\rm d}s,{\rm d}r,{\rm d}\nu,{\rm d}k):=\sum_{i\in \NN}  \delta_{(t_i,r_i,\nu_i, k_i)} ({\rm d}s,{\rm d}r,{\rm d}\nu,{\rm d}k)
 \end{equation} on $[0,\infty)^3\times\mathbb{N}_0$ 
 with intensity
 \[
 {\rm d}s \otimes \Pi({\rm d}r)\otimes {\rm d}\nu\otimes
 \frac{(\lambda r)^k}{k!} {\rm e}^{-\lambda r}\sharp({\rm d}k).
 \]
Now define three random measures by 
 \[
   N^{0}({\rm d}s,{\rm d}r,{\rm d}\nu)={\cal N}({\rm d}s,{\rm d}r,{\rm d}\nu,\{ 0  \}),
   \]
 \[
   N^{1}({\rm d}s,{\rm d}r,{\rm d}\nu)= {\cal N}({\rm d}s,{\rm d}r,{\rm d}\nu,\{1 \})
   \]
   and
\[
N^{2 }({\rm d}s,{\rm d}r,{\rm d}\nu)= {\cal N}({\rm d}s,{\rm d}r,{\rm d}\nu,\{ k \geq2  \}) .
\]
Classical Poisson thinning now tells us that 
$  N^{0}$, $  N^{1}$  and $  N^{2}$ are independent  Poisson point processes on $[0, \infty)^{3}$ with respective intensities ${\rm d}s\otimes {\rm e}^{-{\lambda} r} \Pi({\rm d}r) \otimes {\rm d}\nu$, ${\rm d}s\otimes ({\lambda}r)  {\rm e}^{-{\lambda} r} \Pi({\rm d}r) \otimes {\rm d}\nu$ and   
$
{\rm d}s\otimes\sum_{k=2}^{\infty} ({\lambda}r)^k {\rm e}^{-{\lambda} r} \Pi({\rm d}r)/{k!}\otimes {\rm d}\nu.
$

 With these thinned Poisson random measures in hand, we may start to separate out the different stochastic integrals in \eqref{eqflow}.  We have that, for $t\geq 0$,
 
     \begin{align}
    X_t= 
                       x &+ \alpha \int_{0}^{t} X_{s- }{\rm d}s
                      + \sqrt{2\beta}  \int_{0}^{t}\int_{0}^{X_{s -}} W({\rm d}s,{\rm d}u)  + \int_{0}^{t}\int_{0}^{\infty}\int_{0}^{X_{s -}}r \tilde{N}^{0}({\rm d}s, {\rm d}r, {\rm d}\nu)  \notag \\
                      &\,   + \int_{0}^{t}\int_{0}^{\infty}\int_{0}^{X_{s -}}r N^{1}({\rm d}s, {\rm d}r, {\rm d}\nu) 
                        + \int_{0}^{t}\int_{0}^{\infty}\int_{0}^{X_{s- }}r N^{2}({\rm d}s, {\rm d}r, {\rm d}\nu) \notag\\
                      &\, -\int_{0}^{t}\int_{0}^{\infty}   X_{s -} \sum_{n=1}^{\infty} \frac{({\lambda}r)^n}{n!}  {\rm e}^{-{\lambda} r} r  \Pi({\rm d}r)  {\rm d}s \notag\\
=     x& - \psi'(\lambda) \int_{0}^{t} X_{s } {\rm d}s
                      + \sqrt{2\beta}  \int_{0}^{t}\int_{0}^{X_{s -}} W({\rm d}s,{\rm d}u)  + \int_{0}^{t}\int_{0}^{\infty}\int_{0}^{X_{s -} }r \tilde{N}^{0}({\rm d}s, {\rm d}r, {\rm d}\nu)   \notag\\
                      &\,   + \int_{0}^{t}\int_{0}^{\infty}\int_{0}^{X_{s -}}r N^{1}({\rm d}s, {\rm d}r, {\rm d}\nu) 
                        + 2\beta {\lambda} \int_{0}^{t} X_{s -} {\rm d}s  \notag\\                       
                        &\,   + \int_{0}^{t}\int_{0}^{\infty}\int_{0}^{X_{s- }}r N^{2}({\rm d}s, {\rm d}r, {\rm d}\nu),
                        \label{Xsdemarked}
    \end{align}
   where in the last equality we have used the easily derived fact that $-\int_{(0,\infty)}(1-{\rm e}^{-{\lambda} r})r\Pi({\rm d}r)= -\alpha +2\beta {\lambda}-  \psi'(\lambda) $.
 Recalling \eqref{psicomp}, 
 the  first line  in the last equality   of \eqref{Xsdemarked} corresponds to the  dynamics of a subcritical CSBP with branching mechanism $\psi_\lambda$.  
 
Inspecting the statement of Theorem \ref{coupled}, we see intuitively that in order to prove this result, our job  is to show that the integrals on the right-hand side  of \eqref{Xsdemarked} driven by $N^1$ and $N^2$ can be identified with the mass that immigrates off the skeleton. 

\section{$\lambda$-Skeleton: Proof of Theorem \ref{coupled}} \label{proof1}

We start by addressing the claim that \eqref{coupledSDE} possesses a unique strong solution. Thereafter we prove claims (i) and (ii) of the theorem in order. 

\medskip

We can identify the existence of any  weak solution to \eqref{coupledSDE} with initial value $(\Lambda_0, Z_0)= (x,n)$, $x\geq0$, $n\in\mathbb{N}_0$, by introducing additionally marked versions of the Poisson random measures ${\texttt N}^{1}$ and ${\texttt N}^{2}$, as well as an additional Poisson random measure ${\texttt N}^*$. We will insist that ${\texttt N}^{1}(\d s,\d r,\d{j}, \d\omega)$  has intensity $\d s\otimes r{\rm e}^{-\lambda r}\Pi(\d r)\otimes \sharp(\d j)\otimes\mathbb{P}^{(\lambda)}_r(\d\omega)$ on $[0,\infty)^2\times\mathbb{N}_0\times\mathbb{D}([0,\infty), \mathbb{R})$, ${\texttt N}^{2}(\d s,\d r,\d{k},\d{j}, \d\omega)$ has intensity $ \psi'(\lambda)\d s\otimes \eta_k(\d r)  \otimes p_k\sharp(\d k)\otimes \sharp(\d{j})\otimes \mathbb{P}^{(\lambda)}_r(\d\omega)$ on $[0,\infty)^2\times\mathbb{N}_0\times\mathbb{N}_0\times\mathbb{D}([0,\infty), \mathbb{R})$ and 
${\texttt N}^*(\d s, \d{j}, \d \omega)$ has intensity $2\beta\d s\otimes \sharp(\d j)\otimes \mathbb{Q}^{(\lambda)}(\d\omega)$
on $[0,\infty)\times\mathbb{N}_0\times\mathbb{D}([0,\infty), \mathbb{R})$, where $\mathbb{P}^{(\lambda)}_r$ is the law of a $\psi_\lambda$-CSBP with initial value $r\geq 0$ (formally speaking $\mathbb{P}^{(\lambda)}_0$ is the law of the null process) and $\mathbb{Q}^{(\lambda)}$ is the associated excursion measure.

Our proposed solution to  to \eqref{coupledSDE} will be to first define $(Z_t, t\geq 0)$ as the continuous-time Galton--Watson process with branching rate $\psi'(\lambda)$ and
offspring distribution given by $(p_k, k\geq 0)$. It is then easy to otherwise $Z$ in the more complicated form 
\[
Z_t = 
 \int_{0}^{t}\int_{0}^{\infty}\int_{0}^{\infty}\int_{1}^{Z_{s- }} \int_{\mathbb{D}([0,\infty), \mathbb{R})}(k-1) \, {\texttt N}^{2}(\d s,\d r,\d{k},\d{j}, \d\omega), \qquad t\geq 0.
\] 
 Next we take 
\begin{equation}
\Lambda_t =X^{(\lambda)}_t + D_t,\qquad t\geq 0,
\label{weaksolutionrepresentation}
\end{equation}
where $X^{(\lambda)}$ is an autonomously independent copy of a $\psi_\lambda$-CSBP issued with initial mass $x$ and, given   ${\texttt N}^{1}$ and  ${\texttt N}^{2}$,  $D_t$, $t\geq 0$ is the uniquely identified (up to almost sure modification) `dressed skeleton' described by 
\begin{align*}
D_t&=  \int_{0}^{t}\int_{0}^{\infty}\int_{1}^{Z_{s -}}\int_{\mathbb{D}([0,\infty), \mathbb{R})}  
\omega_{t-s} \, {\texttt N}^1(\d s,\d r,\d{j}, \d\omega)\\
&\hspace{2cm}
      + \int_{0}^{t}\int_{0}^{\infty}\int_{0}^{\infty}\int_{1}^{Z_{s- }} \int_{\mathbb{D}([0,\infty), \mathbb{R})} \omega_{t-s} \, {\texttt N}^{2}(\d s,\d r,\d{k},\d{j}, \d\omega)\\
      &
   \hspace{4cm}+  \int_0^t \int_{1}^{Z_{s- }}\int_{\mathbb{D}([0,\infty), \mathbb{R})}  \omega_{t-s} \, {\texttt N}^*(\d s, \d{j}, \d \omega).
\end{align*}
To see why this provides a weak solution to \eqref{coupledSDE}, we may appeal to the Martingale representation  the  Markov pair $(\Lambda, Z)$ described above. In particular   the  generator of $(\Lambda, Z)$ can be identified consistently with the generator of the process associated to \eqref{coupledSDE}; that it to say, their common generator is given by 
\begin{align*}
\mathbf{L} f(x, n) &= - \psi'(\lambda)x \frac{\partial f}{\partial x } (x, {n}) + \beta x\frac{\partial^2f}{\partial x^2} (x, {n})     \\
    &+ x\int_0^\infty  [f(x+ r, {n}) -f(x, {n} ) - 
    r\frac{\partial f}{\partial x}(x, {n}) ]{\rm e}^{-r \lambda}\Pi(\d r)
    \\
    &+{n}\int_0^\infty \sum_{j\geq 1} [f(x+ r, {n} +j-1) -f(x, {n}) ]
     \frac{r^{j} \lambda^{j-1}}{j!}{\rm e}^{-r \lambda}\Pi(\d r) 
\\
   & +\beta \lambda {n}[f(x , {n} +1) -f(x, {n})]+ \frac{\psi(
   \lambda)}{\lambda} n[f(x , {n} -1) -f(x, {n})]+ 2\beta {n} \frac{\partial f}{\partial x}(x, {n}),
\end{align*}
for $x\geq 0$, $n\in\mathbb{N}_0$, and for all non-negative, smooth and compactly supported functions $f$. (Here,  the penultimate term is understood to be zero when $n=0$.) With this sense of commonality for their generators, it is then easy to verify the conditions of  Theorem 2.3 of \cite{Kurtz} and thus  to conclude that $(\Lambda, Z)$ provides a weak solution to \eqref{coupledSDE}.

\medskip

Pathwise uniqueness is also relatively  easy to establish. Indeed, suppose that $\Lambda$ is the first component of  any path solution to \eqref{coupledSDE} with driving source of randomness ${\texttt N}^{0}$, ${\texttt N}^{1}$, ${\texttt N}^{2}$ and $W$ and suppose that we write it in the form 
\begin{align*}
\Lambda_t 
 =: \Lambda_0 + H( \Lambda_{s}, s<t )+I_t \qquad t\geq 0,   
\end{align*}
where
\begin{align*}
H( \Lambda_{s}, s<t ) &=  -\psi'(\lambda) \int_{0}^{t}
\Lambda_{s-}  \d s+ \sqrt{2\beta}  \int_{0}^{t}\int_{0}^{\Lambda_{s-}}  W(\d s,\d u)\\
&   \hspace{3cm}+ \int_{0}^{t}\int_{0}^{\infty}\int_{0}^{ \Lambda_{s-}} r    \tilde{{\texttt N}}^{0}(\d s, \d r, {\rm d}\nu),\qquad t\geq0,
\end{align*}
and
\begin{align*}
I_t &=   \int_{0}^{t}\int_{0}^{\infty}\int_{1}^{Z_{s -}} 
r {\texttt N}^1(\d s,\d r,\d{j})\\
&      \hspace{2cm}+ \int_{0}^{t}\int_{0}^{\infty}\int_{0}^{\infty}\int_{1}^{Z_{s- }} r  {\texttt N}^{2}(\d s,\d r,\d{k},\d{j}) 
   + 2\beta  \int_{0}^{t}   Z_{s-}  \d s, \qquad t\geq 0.    
\end{align*}
Recalling that the almost sure path of  $Z$ is uniquely defined  by $ {\texttt N}^{2}$, it follows that, if $\Lambda^{(1)}$ and $\Lambda^{(2)}$ are two path solutions to \eqref{coupledSDE} with the same initial value, then 
\[
\Lambda^{(1)}_t-\Lambda^{(2)}_t = H( \Lambda^{(1)}_{s}, s<t )- H( \Lambda^{(2)}_{s}, s<t ), \qquad t\geq 0.
\]
The reader will now note that the above equation is precisely the  SDE one obtains when looking at the path difference between two solutions of an SDE of the type given in \eqref{eqflow}. Since there is pathwise uniqueness for \eqref{eqflow}, we easily conclude that $\Lambda^{(1)} = \Lambda^{(2)}$ almost surely.

Finally, taking account of the existence of a weak solution and pathwise uniqueness, we may appeal to an appropriate version of the Yamada-Watanabe Theorem, see for example Theorem 1.2 of \cite{BLG}, to deduce that \eqref{coupledSDE}  possesses a unique strong solution.
And since this holds for every fixed initial configuration $x$ and $n$, it also holds when the initial values are independently randomised.

\medskip

\noindent {\bf (i)} This claim requires an analytical verification and, in some sense, is similar in spirit to the  proof that, for $t\geq 0$, $Z_t|\Lambda_t$ is Poisson distributed with rate $\lambda^*\Lambda_t$ in the prolific skeletal decomposition found in \cite{BKM}. A fundamental difference here is that we work with SDEs, and hence stochastic calculus, rather than integral equations for semigroups as in \cite{BKM} is needed and, moreover, the parameter $\lambda$ need not be the minimal value, $\lambda^*$, in its range.

Standard arguments show that the solution to \eqref{coupledSDE} is a strong Markov process and accordingly we write $\mathbf{P}_{x,n}$, $x>0$, $n\in\mathbb{N}_0$ for its probabilities. Moreover, with an abuse of notation we write, for $x>0$, 
\begin{equation}
\mathbf{P}_x(\cdot) = \sum_{n\geq 0}\frac{(\lambda x)^n}{n!}{\rm e}^{-\lambda x}\mathbf{P}_{x,n}(\cdot).
\label{abuse}
\end{equation}
 Define  $f_t(\eta, \theta):=\mathbf{E}_x[{\rm e}^{-\eta \Lambda_t-\theta Z_t}]$, $x,\theta,\eta, t\geq 0$, and let
 $F_t:= {\rm e}^{-\eta \Lambda_t-\theta Z_t}$, $t\geq 0$.  Using  It\^{o}'s formula for semi-martingales, cf. Theorem 32 of \cite{P}, for $t\geq 0$,
\begin{equation*}
\begin{split}
{\rm d}F_t=&-\eta F_{t-}{\rm d}\Lambda_t-\theta F_{t-}{\rm d}Z_t+\frac{1}{2}\eta^2 F_{t-}{\rm d}[\Lambda,\Lambda]_t^c+\frac{1}{2}\theta^2 F_{t-}{\rm d}[Z,Z]_t^c\\
&+\eta \theta F_{t-}{\rm d}[\Lambda,Z]_t^c+\Delta F_t+\eta F_{t-}\Delta \Lambda_t+\theta F_{t-}\Delta Z_t.
\end{split}
\end{equation*}
(Here and throughout the remainder of this paper, for any stochastic process $Y$, we use the notation that $\Delta Y_t = Y_t-Y_{t-}$.) As $Z$ is a pure jump process, we have that $[Z,Z]_t^c=[\Lambda,Z]_t^c=0$.
Taking advantage of the fact that  
\[
F_t=F_{t-}{\rm e}^{-\eta\Delta \Lambda_t-\theta\Delta Z_t},
\] 
we may thus write in integral form
\begin{align*}
F_t =&F_0-\eta\int_0^t F_{s-}{\rm d}\Lambda_s-\theta\int_0^t F_{s-}{\rm d}Z_s+\beta\eta^2\int_0^t F_{s-}\Lambda_s {\rm d}s\\
&+\sum_{s\leq t}F_{s-}\left\lbrace {\rm e}^{-\eta\Delta \Lambda_s-\theta \Delta Z_s}-1+\eta\Delta \Lambda_s+\theta \Delta Z_s \right\rbrace,
\end{align*}
where the sum is taken over the countable set of discontinuities of $(\Lambda, Z)$.
We can split up the sum of discontinuities according to the Poisson random measure in \eqref{coupledSDE} that
is responsible for the discontinuity. Hence, writing $\Delta^{(j)}$, $j=0,1,2$, to mean an increment coming from each of the three Poisson random measures, 
\begin{align}\label{Fint}
F_t &=F_0-\eta\int_0^t F_{s-}{\rm d}\Lambda_s-\theta\int_0^t F_{s-}{\rm d}Z_s+\beta\eta^2\int_0^t F_{s-}\Lambda_{s-} {\rm d}s\notag\\
&+\sum_{s\leq t}F_{s-}\left\lbrace {\rm e}^{-\eta\Delta^{(0)} \Lambda_s}-1+\eta\Delta^{(0)} \Lambda_s \right\rbrace\notag\\
&+\sum_{s\leq t}F_{s-}\left\lbrace {\rm e}^{-\eta\Delta^{(1)} \Lambda_s}-1+\eta\Delta^{(1)} \Lambda_s \right\rbrace\notag\\
&+\sum_{s\leq t}F_{s-}\left\lbrace {\rm e}^{-\eta\Delta^{(2)} \Lambda_s-\theta \Delta Z_s}-1+\eta\Delta^{(2)} \Lambda_s+\theta \Delta Z_s \right\rbrace.
\end{align}

Now, note that we can re-write  the first element of the vectorial SDE \eqref{coupledSDE} as
\begin{align*}
\Lambda_t=  &   \Lambda_0-\psi'(\lambda)\int_0^t \Lambda_{s-} {\rm d}s+\sqrt{2\beta}\int_0^t\int_0^{\Lambda_{s-}} W({\rm d}s, {\rm d}u)+M_t\\
&+\sum_{s\leq t}\Delta^{(1)} \Lambda_s+\sum_{s\leq t}\Delta^{(2)} \Lambda_s+2\beta\int_0^t Z_{s-}{\rm d}s,\qquad t\geq 0,
\end{align*}
where $M_t$ is a zero-mean martingale corresponding to the integral   in \eqref{coupledSDE}   with respect to $\tilde{\texttt N}^0$.
Therefore performing the necessary calculus in (\ref{Fint}) for the integral with respect to  ${\rm d}\Lambda_t$, we get that
\begin{align*}
F_t & -F_0-\eta(\psi'(\lambda)+\eta\beta)\int_0^t F_{s-}\Lambda_{s-} {\rm d}s - \sum_{s\leq t}F_{s-}\left\lbrace {\rm e}^{-\eta\Delta^{(2)} \Lambda_s-\theta \Delta Z_s}-1 \right\rbrace\\
&+2\eta\beta\int_0^t F_{s-}Z_{s-} {\rm d}s-\sum_{s\leq t}F_{s-}\left\lbrace {\rm e}^{-\eta\Delta^{(1)} \Lambda_s}-1 \right\rbrace - \sum_{s\leq t}F_{s-}\left\lbrace {\rm e}^{-\eta\Delta^{(0)} \Lambda_s}-1+\eta\Delta^{(0)} \Lambda_s \right\rbrace, \end{align*}
for $t\geq 0$,
is equal to a zero-mean martingale which is the sum of the previously mentioned $M_t$, $t\geq 0$,  and the white noise integral.
Taking expectations, we thus have 
\begin{equation*}
\begin{split}
f_t(\eta,\theta)=&f_0(\eta,\theta)+\eta(\psi'(\lambda)+\eta\beta)\mathbf{E}_x\int_{0}^{t}[F_{s-}\Lambda_{s-}]{\rm d}s-2\eta\beta \mathbf{E}_x\int_{0}^{t}[F_{s-}Z_{s-}]{\rm d}s\\
&+\mathbf{E}_x \int_0^t[ F_{s-} \Lambda_{s-}] {\rm d}s\left(\int_0^\infty ({\rm e}^{-\eta r}-1+\eta r){\rm e}^{-\lambda r}\Pi({\rm d}r)\right)\\
&+\mathbf{E}_x \int_0^t [F_{s-} Z_{s-}] {\rm d}s\left( \int_0^\infty ({\rm e}^{-\eta r}-1)r {\rm e}^{-\lambda r}\Pi({\rm d}r)\right)\\
&+\mathbf{E}_x \int_0^t \frac{1}{\lambda}[F_{s-}Z_{s-}]{\rm d}s\Bigg( \sum_{k=0, k\neq 1}^{\infty} \int_0^\infty ({\rm e}^{-\eta r -\theta(k-1)}-1)\Big\{ \psi(\lambda)\mathbf{1}_{\{k=0\}}\delta_0({\rm d}r)\\
&\hspace{4.5cm}+\delta_0({\rm d}r)\beta\lambda^2\mathbf{1}_{\{k=2\}}+\mathbf{1}_{\{k\geq 2\}}\frac{(\lambda r)^k}{k!}{\rm e}^{-\lambda r}\Pi({\rm d}r)\Big\} \Bigg) .
\end{split}
\end{equation*}
Accumulating terms, we find that  $f_t(\eta,\theta)$ satisfies the following PDE
\begin{align}
\frac{\partial}{\partial t}f_t(\eta,\theta) &=A_\lambda(\eta,\theta)\frac{\partial}{\partial \eta}f_t(\eta,\theta)+B_\lambda(\eta,\theta)\frac{\partial}{\partial \theta}f_t(\eta,\theta),\notag\\
f_0(\eta,\theta)&={\rm e}^{-(\eta +\lambda (1-{\rm e}^{-\theta})) x },
\label{QL}
\end{align}
where 
\begin{align*}
A_\lambda(\eta,\theta)&=\eta(-\psi'(\lambda) -\eta\beta)-\int_0^\infty ({\rm e}^{-\eta r}-1+\eta r){\rm e}^{-\lambda r}\Pi({\rm d}r)\\
B_\lambda(\eta,\theta)&=2\eta\beta-\sum_{k=0}^\infty \int_0^\infty ({\rm e}^{-\eta r-\theta(k-1)}-1)\Bigg\{ \frac{\psi(\lambda)}{\lambda}\mathbf{1}_{\{k=0\}}\delta_0({\rm d}r)+\delta_0({\rm d}r)\beta\lambda\mathbf{1}_{\{k=2\}}\\
&\hspace{10cm}+\mathbf{1}_{\{k\geq 1\}}\frac{\lambda^{k-1} r^k}{k!}{\rm e}^{-\lambda r}\Pi({\rm d}r)\Bigg\}.
\end{align*}

{\color{black}
Standard theory for linear partial differential equation \eqref{QL}, see for example Chapter 3 (Theorem 2, p107) of \cite{evans} and references therein, tells us that it has a unique local solution. Our aim now is to show that this solution is also represented by 
\begin{equation}\label{cblaplace}
\mathbb{E}_x[{\rm e}^{-(\eta+\lambda(1-{\rm e}^{-\theta})){X}_t}] = {\rm e}^{- u_t(\eta + \lambda (1- {\rm e}^{-\theta}))x }, \qquad x,t, \theta, \eta\geq 0,
\end{equation} where we recall that $X$ is the $\psi$-CSBP. 
To this end, let us define $\kappa = \eta + \lambda (1- {\rm e}^{-\theta})$ and note that, for $x,t, \kappa \geq 0$, 
$
g_t(\kappa):  =\mathbb{E}_x[\exp\{-\kappa X_t\}]
$
satisfies
%
%
%
%
%
\begin{equation}\label{Ypde}
\begin{split}
\frac{\partial}{\partial t}{g}_t({\kappa}) &=-\psi(\kappa) \frac{\partial}{\partial {\kappa}}{g}_t({\kappa}),\\
{g}_0({\kappa})&={\rm e}^{-{\kappa} x}.
\end{split}
\end{equation}
See for example Exercise 12.2 in \cite{Kbook}.
After a laborious amount of algebra one can verify that   $-\psi({\kappa})=A_\lambda(\eta,\theta)+\lambda {\rm e}^{-\theta}B_\lambda(\eta,\theta)$ and hence we may develop the right hand side of \eqref{Ypde} and write, for $x, t, \eta, \theta\geq 0$,
\[
\frac{\partial}{\partial t}{g}_t({\kappa}) =A_\lambda(\eta,\theta)\frac{\partial}{\partial {\kappa}}{g}_t({\kappa})+\lambda {\rm e}^{-\theta}B_\lambda(\eta,\theta) \frac{\partial}{\partial {\kappa}}{g}_t({\kappa})= A_\lambda(\eta,\theta)\frac{\partial}{\partial \eta}{g}_t({\kappa})+B_\lambda(\eta,\theta) \frac{\partial}{\partial {\theta}}{g}_t({\kappa}).
\] 

Now we choose $x=1$. Then local uniqueness of the solution to \eqref{QL} (or equivalently the local uniqueness of \eqref{Ypde}) thus tells us that there exists $t_0>0$ such that $g_t( \eta + \lambda (1- {\rm e}^{-\theta})) = f_t (\eta, \theta)$ for all $\eta, \theta \geq 0$ and $t\in [0,t_0]$.

\medskip

In conclusion, now that we have proved that for  $t\in[0,t_0]$ 
\begin{equation}
\mathbb{E}_1[{\rm e}^{-(\eta+\lambda(1-{\rm e}^{-\theta})){X}_t}] = \mathbf{E}_1[{\rm e}^{-\eta \Lambda_t-\theta Z_t}], \qquad x, \theta, \eta\geq 0,
\label{doitforT}
\end{equation}
we can sequentially observe the following implications. Firstly, setting $\theta = 0$ and $\eta>0$, we see that $\Lambda_t$ under $\mathbf{P}_1$ has the same distribution as $X_t$ under $\mathbb{P}_1$ for all $t\in[0,t_0]$. Next, setting both $\eta,\theta>0$, we observe that,  $(\Lambda_t, Z_t)$ under $\mathbf{P}_1$ has the same law as $(X_t, \texttt{Po}(\lambda x)|_{x=X_t})$ under $\mathbb{P}_1$, where $\texttt{Po}(\lambda x)$ is an autonomously  independent Poisson random variable with rate $\lambda x$. In particular, it follows that, for all $t\in[0,t_0]$, under $\mathbf{P}_1$, the law of $Z_t$ given $\Lambda_t$ is $\texttt{Po}(\lambda \Lambda_t)$. 

To get a global result we first show that the previous conclusions hold for any initial mass $x>0$ on the time-interval $[0,t_0]$, then using Markov property, we extend the results for any $t>0$.
First, from \eqref{cblaplace} we can observe that
\[
\mathbb{E}_x\left[ \mathrm{e}^{-(\eta+\lambda(1-\mathrm{e}^{-\theta}))X_t}\right]=\left(\mathbb{E}_1\left[ \mathrm{e}^{-(\eta+\lambda(1-\mathrm{e}^{-\theta}))X_t}\right]\right)^x.
\] 
Thus in order to extend the previous results to any $x>0$ we only need to prove that 
\[
\mathbf{E}_x\left[ \mathrm{e}^{-\eta\Lambda_t-\theta Z_t}\right]=\left(\mathbf{E}_1\left[ \mathrm{e}^{-\eta\Lambda_t-\theta Z_t}\right]\right)^x.
\]
Recalling the representation \eqref{weaksolutionrepresentation} and the notation \eqref{abuse} we can write, for $t\leq t_0$,
\begin{equation*}
\begin{split}
\mathbf{E}_x\left[ \mathrm{e}^{-\eta\Lambda_t-\theta Z_t}\right]&=\sum_{n\geq 0}\frac{(\lambda x)^n}{n!}\mathrm{e}^{-\lambda x}\mathbf{E}_{(x,n)}\left[ \mathrm{e}^{-\eta\Lambda_t-\theta Z_t}\right]\\
&=\sum_{n\geq 0}\frac{(\lambda x)^n}{n!}\mathrm{e}^{-\lambda x}\mathbf{E}_{x}\left[ \mathrm{e}^{-\eta X_t^{(\lambda)}}\right]\mathbf{E}_{(0,n)}\left[ \mathrm{e}^{-\eta D_t-\theta Z_t}\right]\\
&=\left(\mathbf{E}_1\left[ \mathrm{e}^{-\eta X_t^{(\lambda)}}\right]\right)^x\sum_{n\geq 0}\frac{(\lambda x)^n}{n!}\mathrm{e}^{-\lambda x}\left(\mathbf{E}_{(0,1)}\left[ \mathrm{e}^{-\eta D_t-\theta Z_t}\right]\right)^n\\
&=\left(\mathbf{E}_1\left[ \mathrm{e}^{-\eta X_t^{(\lambda)}}\right]\mathrm{e}^{\lambda\left( \mathbf{E}_{(0,1)}\left[ \mathrm{e}^{-\eta D_t-\theta Z_t}\right]-1\right)}\right)^x\\
&=\left(\mathbf{E}_1\left[ \mathrm{e}^{-\eta\Lambda_t-\theta Z_t}\right]\right)^x,
\end{split}
\end{equation*}
as required.

Now take $t_0<t\leq 2t_0$, and use the tower property to get
\begin{equation*}
\mathbf{E}_x\left[ \mathrm{e}^{-\eta \Lambda_t-\theta Z_t}\right]=\mathbf{E}_x\left[ \mathbf{E}_{\Lambda_{t_0}}\left[ \mathrm{e}^{-\eta\Lambda_{t-t_0}-\theta Z_{t-t_0}}\right]\right]=\int_{\mathbb{R}_+}\mathbf{E}_y\left[ \mathrm{e}^{-\eta\Lambda_{t-t_0}-\theta Z_{t-t_0}}\right]\mathbf{P}_x(\Lambda_{t_0}\in \mathrm{d}y),
\end{equation*} 
and similarly
\begin{equation*}
\mathbb{E}_x\left[ \mathrm{e}^{-(\eta+\lambda(1-\mathrm{e}^{-\theta}))X_t}\right]
=\int_{\mathbb{R}_+}\mathbb{E}_y\left[ \mathrm{e}^{-(\eta+\lambda(1-\mathrm{e}^{-\theta}))X_{t-t_0}}\right]\mathbb{P}_x(X_{t_0}\in \mathrm{d}y).
\end{equation*} 
Thus using local uniqueness and the previously deduced implications on $[0,t_0]$ we see that 
\[
\mathbf{E}_x\left[ \mathrm{e}^{-\eta \Lambda_t-\theta Z_t}\right]=\mathbb{E}_x\left[ \mathrm{e}^{-(\eta+\lambda(1-\mathrm{e}^{-\theta}))X_t}\right]
\] 
on $t\in [0,2 t_0]$, and by iterating the previous argument we get equality for any $t>0$.

Finally, }on account of the fact that  $(\Lambda_t, Z_t)$, $t\geq0$, is a joint Markovian pair, this {\color{black} now global} Poissonisation allows us to infer that $\Lambda_t$, $t\geq 0$, is itself Markovian. Indeed, for any bounded measurable and positive $h$ and $s,t\geq 0$,
\[
\mathbf{E}[h(\Lambda_{t+s})| \mathcal{F}^\Lambda_t]  = \sum_{n\geq 0} \frac{(\lambda \Lambda_t)^n}{n!} {\rm e}^{-\lambda \Lambda_t}\mathbf{E}_{x,n}[h(\Lambda_{s})]_{x = \Lambda_t} = \mathbf{E}_{x}[h(\Lambda_{s})]_{x = \Lambda_t}.
\]
We may now conclude that for all $t\geq 0$ and $x>0$, under $\mathbf{P}_x$, $Z_t|\mathcal{F}^\Lambda_t$ is $\texttt{Po}(\lambda \Lambda_t)$-distributed as required.

\medskip

\noindent{\bf (ii)} {\color{black} We have seen that the pair $((\Lambda_t,Z_t),t\geq 0)$ is a Markov process for any initial state $(x,n)$ but, due to the dependence on $Z$, on its own $(\Lambda_t,t\geq 0)$ is not Markovian.
However considering \eqref{doitforT} we see that after the Poissonisation of $Z_0$, $(\Lambda_t, t\geq 0)$ becomes a Markov process with semi-group that agrees with that of $(X_t, t\geq 0)$.}
On account of the fact that $X$ is the unique weak solution to \eqref{eqflow}, it automatically follows that $\Lambda$ also represents the unique weak solution to \eqref{eqflow}.
\medskip

\noindent{\bf (iii)} 
Since the event $\{Z_0=0\}$ implies the event $\{Z_t=0,t\geq 0\}$, the system \eqref{coupledSDE} reduces to the SDE
 \begin{align*}
    \Lambda_t= x& + \psi'(\lambda) \int_{0}^{t} \Lambda_{s-}{\rm d}s 
                      + \sqrt{2\beta}  \int_{0}^{t}\int_{0}^{\Lambda_{s-}} W({\rm d}s,{\rm d}u) 
                      + \int_{0}^{t}\int_{0}^{\infty}\int_{0}^{\Lambda_{s-}}r \tilde{N}^0({\rm d}s, {\rm d}r, {\rm d}\nu), 
    \end{align*}
  which has the exact form as the SDE describing the evolution of a CSBP with branching mechanism $\psi_\lambda$.   
 \hfill $\square$

\section{Exploration of subcritical CSBPs}\label{explore}
The objective of this section is to give a heuristic description of how the notion of a prolific skeleton emerges in the subcritical case and specifically why the structure of the SDE \eqref{TcoupledSDE} is meaningful in this respect. We need to be  careful about what one means by `prolific' but nonetheless, the inspiration for a decomposition  can be gleaned by examining in more detail the description of subcritical CSBPs through the exploration process. 

We  assume throughout the conditions of Theorem \ref{T}.  That is to say, $X$  a (sub)critical $\psi$-CSBP where $\psi$ satisfies Grey's condition \eqref{grey}.
Let $(\xi_t,t\geq 0)$ be a spectrally positive L\'evy process with Laplace exponent $\psi$.
Using the classical work of \cite{LeGLeJ, LeGLeJ1} (see also \cite{DG, DLG, LeG})  we can use generalised Ray--Knight-type theorems to construct $X$ in terms of the so-called height process associated to $\xi$. For convenience and to introduce more notation, we give a brief overview here. 

Denote by $(\hat{\xi}^{(t)},0\leq r\leq t)$ the time reversed process at time $t$, that is $\hat{\xi}_r^{(t)}:=\xi_t-\xi_{(t-r)-}$, and let $\hat{S}_r^{(t)}:=\sup_{s\leq r} \hat{\xi}_s^{(t)}$.
We define $H_t$ as the local time at level 0, at time $t$ of the process $\hat{S}^{(t)}-\hat{\xi}^{(t)}$. Because the reversed process has a different point from which is reversed at each time, the process $H$ does not behave in a monotone way.
The process $(H_t,t\geq 0)$ is called the $\psi$-height process, which, under assumption (\ref{grey}), is continuous.
There exists a notion of local time up to time $t$ of $H$ at level $a\geq 0$, henceforth denoted by $L_t^a$. Specifically, the family $(L^a_t, a,t\geq 0)$ satisfies 
\[
\int_0^t g(H_s){\rm d}s = \int_0^\infty g(a)L^a_t{\rm d}a\qquad t\geq 0,
\]
where $g$ is a non-negative measurable function.

For $x>0$ let $T_x:=\inf\{t\geq 0, \xi_t=-x\}$.
Then the generalised Ray-Knight theorem for the $\psi$-CSBP process states that $(L^a_{T_x},a\geq 0)$ has a c\`adl\`ag modification for which 
\begin{equation*}
(L^t_{T_x},t\geq 0)\overset{d}{=} (X, \mathbb{P}_x),
\end{equation*}
that is, the two processes are equal in law.

The height process also codes the genealogy of the $\psi$-CSBP.
It can be shown that the excursions of  $H$  from 0 form  a time-homogeneous Poisson point process of excursions with respect to local time at 0. We shall use $\texttt{n}$ to denote its intensity measure. 
%
%
%
%
 If $X_0 = x$, then the total amount of local time of $H$ accumulated at zero is $x$.  Each excursion codes a real tree (see \cite{DLG}  for a precise meaning) such that the excursion that occurs after $u\leq x$ units of local time  can be thought of as the descendants of the `$u$-th' individual in the initial population.
Here we are interested in the genealogy of the conditioned process and what we will call the embedded `$T$-prolific' tree, that is the tree of the individuals that survive up to time $T$.
Conditioning the process on survival up to time $T$ corresponds to conditioning the height process to have at least one excursion above level $T$. (We have the slightly confusing, but nonetheless standard, notational anomaly that a spatial height for an excursion corresponds to the spatial height in the tree that it codes, but that this may also be seen as a time into the forward evolution of the tree.)
Let $\texttt{n}_T$ denote the conditional probability $\texttt{n}(\cdot| \sup_{s\geq 0}\epsilon_s\geq T)$ where  $\epsilon$ is a canonical   excursion of $H$ under $\texttt{n}$. 
Let $(Z_t^T,t\geq 0)$ be the process that counts the number of excursions  above level $t$ that hit level $T$ within the excursion $\epsilon$.
Duquesne and Le Gall in \cite{DLG} describe the distribution of $Z^T$ under $\texttt{n}_T$ and prove the following.
\begin{theorem}
Under {\rm $\texttt{n}_T$} the process $(Z_t^T,0\leq t {\color{black}<} T)$ is a time-inhomogeneous Markov process whose law is characterised by the following identities.
For every $\lambda>0$
{\rm
\begin{equation*}
\texttt{n}_T\left[\exp\{-\lambda Z_t^T\}\right]=1-\frac{u_t((1-{\rm e}^{-\lambda})u_{T-t}(\infty))}{u_T(\infty)},
\end{equation*}
}
and if $0\leq t< t' <T$,
{\rm 
\begin{equation*}
\texttt{n}_T\left[ \exp\{-\lambda Z_{t'}^T\}|Z_t^T\right] = \left(\texttt{n}_{T-t}\left[ \exp\{ -\lambda Z_{t'-t}^{T-t}\}\right]\right)^{Z_t^T}.
\end{equation*}
}
\end{theorem}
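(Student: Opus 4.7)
My plan is to exploit the Poissonian structure of sub-excursions of the $\psi$-height process above any fixed level, together with the excursion identity $\texttt{n}[1 - {\rm e}^{-\theta \zeta_t}] = u_t(\theta)$, where $\zeta_t := L^t_\zeta$ denotes the total local time of $\epsilon$ accumulated at level $t$. The essential input I would invoke from \cite{DG} (or equivalently from the snake / Ray--Knight construction) is that, under $\texttt{n}$, conditional on $\zeta_t$, the sub-excursions of $\epsilon$ above level $t$ form a Poisson point process on $\mathbb{D}([0,\infty),\mathbb{R})$ with intensity $\zeta_t \, \texttt{n}$. Combined with the fact that a generic excursion reaches height $s$ with $\texttt{n}$-mass $u_s(\infty)$, Poisson thinning then yields that $Z_t^T$, conditional on $\zeta_t$, is Poisson distributed with parameter $\zeta_t\, u_{T-t}(\infty)$.

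To establish the first identity, I would first note that $\{\sup \epsilon \geq T\} = \{Z_t^T \geq 1\}$ for $t<T$, so that
\[
\exp\{-\lambda Z_t^T\}\mathbf{1}_{\{\sup\epsilon \geq T\}} = \mathbf{1}_{\{Z_t^T \geq 1\}} - (1 - \exp\{-\lambda Z_t^T\}).
\]
Integrating against $\texttt{n}$ and conditioning on $\zeta_t$, the conditional Poisson law gives
\[
\texttt{n}[1 - \exp\{-\lambda Z_t^T\}] = \texttt{n}[1 - \exp\{-(1-{\rm e}^{-\lambda}) u_{T-t}(\infty)\, \zeta_t\}] = u_t((1-{\rm e}^{-\lambda}) u_{T-t}(\infty)),
\]
and likewise $\texttt{n}(Z_t^T \geq 1) = u_t(u_{T-t}(\infty)) = u_T(\infty)$, where the last equality is the semigroup identity for $u$, itself a consequence of the ODE $\partial_t u_t = -\psi \circ u_t$ with $u_0 = \mathrm{id}$. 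Dividing by $\texttt{n}(\sup \epsilon \geq T) = u_T(\infty)$ delivers the first identity.

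For the time-inhomogeneous Markov property and the factorisation identity, I would use the same snake/branching input: conditional on the portion of the height process below level $t$, and in particular on $Z_t^T = k$, the $k$ sub-excursions of $\epsilon$ above level $t$ that actually reach absolute height $T$ are i.i.d., each distributed as $\texttt{n}_{T-t}$. Consequently, for $t < t' < T$, $Z_{t'}^T$ decomposes as the sum of $k$ independent copies of $Z_{t'-t}^{T-t}$ under $\texttt{n}_{T-t}$, and the factorisation identity follows from independence of the Laplace transforms; the Markov property is automatic from the fact that this description depends on the past only through $Z_t^T$. I expect the principal technical obstacle to be making the ``conditional Poisson point process of sub-excursions'' description fully rigorous under the $\sigma$-finite measure $\texttt{n}$. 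I would handle this by disintegrating $\texttt{n}$ via the first passage local time at level $t$, thereby reducing the Poissonisation and the subsequent independence assertion to the classical branching property of the $\psi$-CSBP under $\mathbb{P}_x$, $x>0$.
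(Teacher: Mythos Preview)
The paper does not give its own proof of this theorem: it is quoted verbatim as a result of Duquesne and Le~Gall \cite{DG} and used purely as heuristic motivation for the time-inhomogeneous SDE \eqref{TcoupledSDE}. Your argument is correct and is essentially the one in \cite{DG}: the key inputs are exactly the conditional Poisson structure of sub-excursions above a fixed level (with intensity $\zeta_t\,\texttt{n}$), the identity $\texttt{n}[1-{\rm e}^{-\theta \zeta_t}]=u_t(\theta)$, and the semigroup relation $u_t\circ u_s=u_{t+s}$, combined in the way you describe.
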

\noindent In essence, the second part of the above theorem shows that $Z^T$ has the branching property. However, temporal inhomogeneity means that it is a time-dependent continuous-time Galton--Watson process. 
 In \cite{DLG} it is moreover shown that, conditionally on $L_\sigma^t$ under $\texttt{n}_t$, where $\sigma$ is the length of the excursion $\epsilon$, $Z_t^T$ is Poisson distributed with intensity $u_{T-t}(\infty)L_\sigma^t$. Thinking of $L_\sigma^t$ as the mass at time $t$ in the tree of descendants of the prolific individual in the initial population that the excursion codes, we thus have a Poisson embedding of the number of prolific descendants of that one individual within the excursion.

The time-dependent continuous-time Galton--Watson process in the theorem can also be characterised as follows.
At time 0 we start with one individual.
Then the law of the first branching time, $\gamma_T$,  is given by 
\begin{equation}\label{eq:nT}
\texttt{n}_T(\gamma_T>t) = \frac{\psi(u_T(\infty))}{u_T(\infty)}\frac{u_{T-t}(\infty)}{\psi(u_{T-t}(\infty))}, \qquad t\in[0,T),
\end{equation}
and, conditionally on $\gamma_T$, the probability generating function of the offspring distribution is
\begin{equation*}
\texttt{n}_T\left[ r^{Z_{\gamma_T}^T}|\gamma_T=t\right] = 1+\frac{\psi((1-r)u_{T-t}(\infty))-(1-r)u_{T-t}(\infty)\psi'(u_{T-t}(\infty))}{u_{T-t}(\infty)\psi'(u_{T-t}(\infty))-\psi(u_{T-t}(\infty))}, \qquad r\in[0,1].
\end{equation*}
The offspring distribution when a split occurs at height $t$ in the excursion (equivalently time $t$ in the underlying genealogical tree), say $(p^{T-t}_k, k\geq 0)$, is explicitly given by the following.
We have $p^{T-t}_0=p^{T-t}_1=0$ and for $k\geq 2$
\begin{align}
p^{T-t}_k &= \frac{1}{u_{T-t}(\infty)\psi'(u_{T-t}(\infty))-\psi(u_{T-t}(\infty))}\notag\\
&\times\left\lbrace \beta u_{T-t}^2(\infty)\mathbf{1}_{\{k=2\}}+\int_0^\infty \frac{\left(  u_{T-t}(\infty)x\right)^k}{k!}{\rm e}^{-u_{T-t}(\infty) x}\Pi(dx)\right\rbrace.\label{proba}
\end{align}

Using (\ref{eq:nT}) we can compute the rate of branching at any height $t$ in the excursion.
First it is not hard to see that
\begin{equation*}
\texttt{n}_T(\gamma_T>t+s|\gamma_T>t) = \texttt{n}_{T-t}(\gamma_T>s), \qquad 0\leq t+s< T.
\end{equation*}
Hence, the rate is
\begin{equation*}
\frac{\rm d}{{\rm d}s}\texttt{n}_{T-t}(\gamma_T>s)|_{s=0} = \frac{u_{T-t}(\infty)\psi'(u_{T-t}(\infty))-\psi(u_{T-t}(\infty))}{u_{T-t}(\infty)}, \qquad t\in [0, T).
\end{equation*}

\bigskip
Again thinking of $L^t_\sigma$, $t{\color{black}<} T$ as the mass of the tree coded by the excursion, and noting that not all of this mass is prolific, we would like to characterise the non-$T$-prolific mass that has `immigrated' along the path of the prolific tree.
 We expect this to be a CSBP conditioned to die before time $T$.
 Using \eqref{semigroup}, we know that the probability that the process dies up to time $T$ is given by:
 \begin{equation*}
\mathbb{P}\left[ X_T=0| \mathcal{F}_t\right]={\rm e}^{- X_t u_{T-t}(\infty)},
 \end{equation*}
where we  assume Grey's condition \eqref{grey} to ensure that the above conditioning makes sense. 
A simple application of the Markov property tells us that the  law of $X$ conditioned to die out by time $T$ can be obtained by the following change of measure
 \begin{equation}
 \left.\frac{{\rm d}\mathbb{P}^T_x}{{\rm d}\mathbb{P}_x}\right|_{\mathcal{F}_t}=\frac{{\rm e}^{-X_t u_{T-t}(\infty)}}{{\rm e}^{-x u_T(\infty)}}, \qquad t\geq 0, x>0.
 \label{TCOM}
 \end{equation}
Indeed,  using the semigroup property of $u_t$, $t\geq 0$, it is not hard to verify that the right-hand side above is a martingale. 
We would like to understand how to characterise  the evolution of the process $(X, \mathbb{P}^T_x)$, $x>0$, a little better as the change of measure is time inhomogeneous. 

To this end, we again appeal to  It\^{o}'s formula.
Denote $v(t):=u_{T-t}(\infty)$, then for non-negative, twice differentiable and compactly supported functions $f$, after a routine, albeit lengthy, application of It\^o's formula we get, for $t\geq 0$ and $x>0$,
\begin{equation*}
\begin{split}
{\rm e}^{-X_t v(t)}f(X_t)=& {\rm e}^{-x v(0)}f(x)+M_t \\
&+\int_0^t X_s {\rm e}^{-X_s v(s)}\Big( -f(X_s)v'(s)-\alpha v(s)f(X_s)+\alpha f'(X_s)\\
&\hspace{4cm}+\beta v^2(s)f(X_s)-2\beta v(s)f'(X_s)+\beta f''(X_s) \Big)\d t\\
&+\sum_{s\leq t}\Big( {\rm e}^{-X_s v(s)}f(X_s)-{\rm e}^{-X_{s-} v(s-)}f(X_{s-})\\
&\hspace{2cm}+v(s-) {\rm e}^{-X_{s-}v(s-)}f(X_{s-})\Delta X_s-{\rm e}^{-X_{s-}v(s-)}f'(X_{s-})\Delta X_s\Big),
\end{split}
\end{equation*}
where $M_t$, $t\geq 0$, represents  the martingale terms.
Taking expectations we get
\begin{equation*}
\begin{split}
&\mathbb{E}_x\left[{\rm e}^{-X_t v(t)}f(X_t)\right]\\
&= {\rm e}^{-x v(0)}f(x)\\&+
\mathbb{E}_x\Bigg[\int_0^t \Big\{X_s {\rm e}^{-X_s v(s)}\Big( -f(X_s)v'(s)-\alpha v(s)f(X_s)+\alpha f'(X_s)\\
&\hspace{5cm}+\beta v^2(s)f(X_s)-2\beta v(s)f'(X_s)+\beta f''(X_s) \Big)\\
&\hspace{2cm}+\int_0^\infty X_{s}\Big( {\rm e}^{-(X_{s}+y) v(s)}f(X_{s}+y)-{\rm e}^{-X_{s} v(s)}f(X_{s})\\
&\hspace{5cm}+v(s) {\rm e}^{-X_{s}v(s)}f(X_{s})y-{\rm e}^{-X_{s}v(s)}f'(X_{s})y\Big)\Pi(\d y)\Big\} {\rm d}s\Bigg].\\
\end{split}
\end{equation*}
Gathering terms, making use of the expression for $\psi$ in \eqref{mechanism} and that 
\[-\frac{\partial}{\partial s}u_{T-s}(\infty)+\psi(u_{T-s}(\infty))=0
\]
we have, for $t\geq 0$ and $x>0$, the Dynkin formula
\begin{equation*}
\begin{split}
\mathbb{E}^T_x\left[ f(X_t)\right]&=\mathbb{E}_x\left[ \frac{{\rm e}^{-u_{T-t}(\infty) X_t}}{{\rm e}^{-u_T(\infty) x}}f(X_t)\right] \\
&= f(x)+\mathbb{E}_x\left[ \int_0^t \frac{{\rm e}^{-u_{T-s}(\infty) X_s }}{{\rm e}^{-u_T(\infty) x}}\left(\hat{\mathcal{L}}_{T-s} f(X_s)\right) dt\right]\\
&=f(x)+\mathbb{E}^T_x\left[ \int_0^t\hat{\mathcal{L}}_{T-s} f(X_s) \d t\right],
\end{split}
\end{equation*}
where the infinitesimal generator is given by 
\begin{align}
\hat{\mathcal{L}}_{T-t} f(x)&= \psi'(u_{T-t}(\infty))xf'(x)+ \beta x f''(x)+ \notag\\
&\hspace{3cm} x\int_0^\infty(f(x+y)-f(x)-yf'(x)){\rm e}^{-u_{T-t}(\infty) y}\Pi(\d y).
\label{Tdie}
\end{align}
For comparison, consider
the generator of a CSBP with Esscher transformed branching mechanism $\psi_\lambda$, which   is given by 
\begin{equation}
\mathcal{L}_\lambda f(x)= \psi'(\lambda)xf'(x) + \beta xf''(x) + x \int_0^\infty(f(x+y)-f(x)-yf'(x)){\rm e}^{-\lambda y}\Pi(\d y)
\label{L}
\end{equation}
for suitably smooth and integrable functions $f$. Recall that the CSBP  with generator \eqref{L} is subcritical providing $\psi'(\lambda)>0$ and, taking account of \eqref{meangrowth}, the greater this value, the `more subcritical' it becomes. It appears that $\hat{\mathcal{L}}_{T-t}$ has the form of an Esscher transformed branching mechanism based on $\psi$ where the parameter shift is controlled by $u_{T-t}(\infty)$, which explodes as $t\to T$. 
Said another way, if we define $V^T_t(\theta)$, $0\leq t<T$, $x,\theta\geq 0$ as the exponent satisfying
\[
\mathbb{E}^T_x[{\rm e}^{-\theta X_t}] = {\rm e}^{-x V^T_t(\theta)}, 
\]
then 
\begin{equation}
V^T_t(\theta) = u_t(\theta + u_{T-t}(\infty)) - u_{T}(\infty) .
\label{V^T_t}
\end{equation}

Recalling that we are assuming Grey's condition \eqref{grey} for a (sub)critical process, we  note  from \eqref{semigroup} that 
\[
\lim_{T\to\infty}u_T(\infty) = 0.
\]
 In that case,  the density in  \eqref{TCOM} tends to unity as $T\to\infty$.

\bigskip

We conclude this section by returning to Theorem \ref{T}. The discussion in this  section shows that {\color{black} in the (sub)critical case}, the components of the SDE \eqref{TcoupledSDE} mimic precisely the description of the $T$-skeleton in the previous section.  In particular, the first three integrals in \eqref{TcoupledSDE} indicate that once mass is created in the SDE, it evolves as a time-dependent CSBP with generator $\hat{\mathcal{L}}_{T-t}$. Moreover, the evolution of the skeleton $Z^T$ as described in \eqref{TcoupledSDE}, matches precisely the dynamics of the $T$-prolific skeleton described in the previous section (for which we pre-emptively used the same notation), which is a time-dependent continuous-time Galton--Watson process. Indeed, the branching rate and the time-dependent offspring distribution of both match.

\begin{rem}\rm It is important to note that  even though the time-dependent $T$-prolific skeleton is inspired by the height process, the description does not require $\psi$ to be a (sub)critical branching mechanism. Indeed, only requiring Grey's condition to be satisfied ensures that the branching rate and offspring distribution of this section are well defined. Similarly, we can apply the change of measure in \eqref{TCOM} for any CSBP satisfying \eqref{grey}, and get a time-dependent CSBP with generator $\hat{\mathcal{L}}_{T-t}$. 
Although the results of Theorem \ref{T} were motived by Duquesne and Le Gall \cite{DLG}, we can see that the theorem can be stated in a more general setting, and thus extends the existing family of finite-horizon decompositions for CSBPs. 
\end{rem}

\section{Finite-time horizon Skeleton: Proof of Theorem \ref{T}}\label{proof2}

Now that we understand that the mathematical structure of \eqref{TcoupledSDE} is little more than a time-dependent version of \eqref{coupledSDE},   the reader will not be surprised by the claim that the proof of strong uniqueness to  \eqref{TcoupledSDE}  as well as parts  (i) and (ii) of Theorem \ref{coupled} pass through almost verbatim, albeit needing some minor adjustments for additional time derivatives of $u_{T-t}(\infty)$ in e.g. \eqref{Fint} and \eqref{Ypde}, which plays the role of $\lambda$. To avoid repetition, we simply leave the proof of these two parts as an exercise for the reader.  

\medskip

On the event $\{Z_0 =0\}$, which is concurrent with the event $\{Z_t = 0, \, 0\leq t<T\}$ close inspection of \eqref{TcoupledSDE} allows us to  note that $\Lambda$ is generated by an SDE with time-varying coefficients. Indeed, standard arguments show that conditional on $\{Z_0 =0\}$, $\Lambda$ is a time inhomogeneous Markov processes.  

Suppose that we write $\mathbf{P}^T_{x,n}$, $x\geq 0, n\in\mathbb{N}_0$ for the law of the Markov probabilities corresponding to the solution of \eqref{TcoupledSDE}. Moreover, we will again abuse this notation in the spirit of \eqref{abuse} and write $\mathbf{P}^T_x$, $x\geq 0$, when $Z^T_0$ is randomised to be an independent Poisson random variable with rate $u_T(\infty)x$.  We can use  part (i) and (ii) of Theorem \ref{T}, together with  \eqref{TCOM} to deduce that
\begin{align}
  \mathbf{E}^T_{x}\left[ {\rm e}^{-\eta \Lambda_t}|Z_0=0\right]& = \frac{ \mathbf{E}^T_{x}\left[ {\rm e}^{-\eta \Lambda_t}, 
  \, Z_0=0\right]}{ \mathbf{P}^T_{x}(Z_0 = 0)}\notag\\
  & = \frac{ \mathbf{E}^T_{x}\left[ {\rm e}^{-\eta \Lambda_t}, 
  \, Z_t=0\right]}{ \mathbf{P}^T_{x}(Z_0 = 0)}\notag\\
  & = {\rm e}^{u_T(\infty)x} \mathbf{E}^T_{x}\left[ {\rm e}^{-(\eta +u_{T-t}(\infty)) \Lambda_t}\right]\notag\\
  & = \mathbb{E}^T_{x}\left[ {\rm e}^{-\eta X_t}\right].\label{Z=0}
  \end{align}
  This tells us that the semigroups of $\Lambda$ conditional on $\{Z_0 =0\}$ and $X$ conditional to become extinct by time $T$ agree. Part (iii) of Theorem \ref{T} is thus proved.
\hfill$\square$

\section{Thinning the skeleton to a spine: Proof of Theorem \ref{skeletontospine}}\label{proof3}

The aim of this section is to recover the unique solution to \eqref{surv} as a weak limit of \eqref{TcoupledSDE} in the sense of Skorokhod convergence. To this end, we assume throughout the conditions of Theorem \ref{skeletontospine}, in particular that $\psi$ is a critical or subcritical branching mechanism and Grey's condition \eqref{grey} holds.

There are three main reasons why we should expect this result and these three reasons pertain to the three structural features of the skeleton decomposition: The feature of Poisson embedding, the Galton--Watson skeleton and the branching immigration from the skeleton with an Esscher transformed branching mechanism. Let us dwell briefly on these heuristics.

First,  let us consider the behaviour of the skeleton $(Z_t^T, t<T)$ as $T\to\infty$. As we are assuming that $\psi$ is a (sub)critical branching mechanism, it holds that $\lim_{T\rightarrow\infty}u_T(\infty) = 0$ as $T\rightarrow\infty$.
Thus, recalling that $Z^T_0\sim \texttt{Po}(u_T(\infty)x)$, i.e. independent and Poisson distributed with parameter $u_T(\infty)x$,  and hence conditioning on survival to time $T$ in the skeletal decomposition is tantamount to conditioning on the event $\{Z^T_0\geq 1\}$, we see that 
 \begin{equation}\label{condprob}
 \varpi^{x,T}_k: = \mathbf{P}^T_x[Z_0=k|Z_0\geq 1] = \frac{(u_{T}(\infty)x)^k}{k!}\frac{{\rm e}^{-u_T(\infty) x}}{1-{\rm e}^{-u_{T}(\infty)x}}, \qquad k\geq 1.
 \end{equation}
We thus see that the probabilities \eqref{condprob} all tend to zero unless $k=1$ in which case  the limit is unity.
Moreover, Theorem \ref{T} (ii) and (iii) imply that the law of  $(\Lambda^T_t, 0\leq t<T)$ conditional on $(\mathcal{F}^{\Lambda_t^T}\cap\{Z^T_0\geq 1\}, 0\leq t<T)$ corresponds to the law of the $\psi$-CSBP, $X$, conditioned to survive until   time $T$. Intuitively, then, one is compelled to believe that, in law, there is asymptotically a single skeletal contribution to the law of $X$ conditioned to survive.

 Second,  considering (\ref{eq:nT}), it follows from l'Hospital's rule that the rate at which aforementioned most common recent ancestor  branches  begins to slow down since
 \begin{equation*}
 \frac{\psi(u_T(\infty))}{u_T(\infty)}\frac{u_{T-t}(\infty)}{\psi(u_{T-t}(\infty))}\rightarrow 1,
 \end{equation*}
 as $T\rightarrow \infty$.
 Furthermore, considering \eqref{proba}, we also get that in the limit of the offspring distribution $\lim_{T\to\infty}p^T_k=0$ for all $k\geq 0$.  What we are thus observing is a thinning, in the weak sense, of the skeleton, both in terms of the number of branching events as well as the number of offspring.

Thirdly, we consider  the mass that immigrates from the skeleton. For a fixed $T$, it evolves as a $\psi$-CSBP conditioned to die before time $T$. We recall that conditioning to die before time $T$ is tantamount to the change of measure given in \eqref{TCOM}. It is easy to see that  as $T\to\infty$, the density in this change of measure converges to unity and hence immigrating mass, in the weak limit, should have the evolution of a $\psi$-CSBP. 

\medskip

With all this evidence in hand,  Theorem \ref{skeletontospine} should now take on a natural meaning.  We give its proof below.

\begin{proof}[Proof of Theorem \ref{skeletontospine}]
According to Theorem 2.5 on p.167 of \cite[Chapter 4]{EthKur}, if $E$ is a locally compact and separable metric space, 
   $\mathcal{P}^T:  =(\mathcal{P}_t^T, t\geq 0)$, $T>0$, is a sequence of Feller semigroups on $C_0(E)$ (the space of continuous functions on $E$ vanishing at $\infty$, endowed with the supremum norm),   $\mathcal{P}: = (\mathcal{P}_t, t\geq 0)$ is a Feller semigroup on  $C_0(E)$ such that, for $f\in C_0(E)$, with respect to the supremum norm on the space $C_0(E)$,
  \begin{equation}
  \lim_{T\to\infty} \mathcal{P}_t^T f=\mathcal{P}_t f, \quad t\geq 0,
\label{Fellercgce}
  \end{equation}
  and moreover,  $(\nu^T,T>0)$ is  a sequence of Borel probability measures on  $E$
 such that $\lim_{T\to\infty}\nu^T =\nu$ weakly  for some probability measure $\nu$  then, with respect to the Skorokhod topology on $\mathbb{D}([0,\infty), E)$, $\Xi^T$ converges weakly to $\Xi$, where $(\Xi^T,T>0)$  are  the strong Markov processes associated to   $(\mathcal{P}^T, T>0)$ with initial law $(\nu^T,T>0)$ and $\Xi$ is the strong Markov processes associated to   $\mathcal{P}$ with initial law $\nu$, respectively.

Note that such weak convergence results would normally require a tightness criterion, however, having the luxury of \eqref{Fellercgce}, where $\mathcal{P}$ is a Feller semigroup, removes this condition and this will be the setting in which we are able to apply the conclusion of the previous paragraph.

\medskip

Fix $t_0>0$. We want to prove the weak convergence result in the finite time window $[0,t_0]$.
In order to introduce the role of $\mathcal{P}^T$, $T>0$, in our setting, we will abuse yet further previous notation and 
 define $\mathbf{P}^T_{x,n, s}$, $x\geq 0$, $n\geq 0$, $s\in[0,t_0]$  to be the Markov probabilities associated to the three-dimensional process $(\Lambda_t, Z_t, \tau_t)$, $0\leq t\leq t_0$, whenever $t_0<T$, where $(\Lambda_t, Z_t)$, $0\leq t\leq t_0$ is the weak solution to \eqref{TcoupledSDE}, and $\tau_t: = t$, $0\leq t\leq t_0$.
 Consistently with previous related notation, we have $\mathbf{P}^T_{x,n, 0} = \mathbf{P}^T_{x,n}$, $x\geq 0$, $n\geq 0$, 
 Now 
define the associated time-dependent 
semigroup for the three-dimensional process, for $t\geq 0$ and $f\in C_0([0,\infty)\times\mathbb{N}_0\times[0,\infty))$, such that $\texttt{P}^T_t[f](x,n,s)=f(x,n,s)$ when $T\leq t_0$, and when $T> t_0$ we have
\begin{align*}
\texttt{P}^T_t[f](x,n,s)&: = \mathbf{E}^T[f(\Lambda_{(t\vee s)\wedge t_0}, Z_{(t\vee s)\wedge t_0}, \tau_{(t\vee s)\wedge t_0})| \Lambda_{s} =x, Z_{s} = n,\tau_{s}=s]
\end{align*}
for $(x,n,s)\in[0,\infty)\times\mathbb{N}_0\times[0,t_0]$, and $\texttt{P}^T_t[f](x,n,s): = f(x,n,s)$ for $(x,n,s)\in[0,\infty)\times\mathbb{N}_0\times(t_0,\infty)$.
We take  $\mathcal{P}^T=\texttt{P}^T$.
In order to verify the Feller property of $\texttt{P}^T_t[f](x,n,s)$ we need to check two things (cf. Proposition 2.4 in Chapter III of \cite{RY}): 
\begin{itemize}
\item[(i)] For each $t\geq 0$, the function $(x,n,s)\mapsto \texttt{P}^T_t[f](x,n,s)$ belongs to $C_0([0,\infty)\times\mathbb{N}_0\times[0,\infty))$, for any $f$ in that space.
\item[(ii)] For all $f\in C_0([0,\infty)\times\mathbb{N}_0\times[0,\infty))$ and for each $(x,n,s)\in [0,\infty)\times\mathbb{N}_0\times[0,\infty)$, we have $\lim_{t\downarrow s}\texttt{P}^T_t[f](x,n,s) = f(x,n,s)$. 
\end{itemize}
Note that when $T\leq t_0$, or $s\geq t_0$, or $t\leq s\leq t_0$, we have $\texttt{P}^T_t[f](x,n,s)=f(x,n,s)$.
Since $f\in C_0([0,\infty)\times\mathbb{N}_0\times[0,\infty))$, both (i) and (ii) are trivially satisfied.
We can also notice that the case when $T>t_0$, $s\leq t_0$ and $t\geq t_0$ reduces to the case of $t=t_0$,
hence in order to show the Feller property of $\texttt{P}^T_t[f](x,n,s)$ we can restrict ourselves to the case of $s\leq t\leq t_0<T$.

By denseness    of  the sub-algebra generated by exponential functions (according to the uniform topology) in $C_0(E)$, it suffices to check, for (i), that 
\begin{equation}
(x,n,s)\mapsto  \mathbf{E}_{x,n,s}^T\left[ {\rm e}^{-\gamma \Lambda_{t}-\theta Z_{t} - \varphi\tau_{t}}\right], \quad s\leq t\leq t_0<T,
\label{check1}
\end{equation}
belongs to $C_0([0,\infty)\times\mathbb{N}_0\times[0,\infty))$ and, for (ii), that 
\begin{equation}
\lim_{t\downarrow 0} \mathbf{E}_{x,n,s}^T\left[ {\rm e}^{-\gamma \Lambda_{t}-\theta Z_{t} - \varphi\tau_{t}}\right] = {\rm e}^{-\gamma x-\theta n - \varphi s}, \quad s\leq t\leq t_0<T.
\label{check2}
\end{equation}
To this end, note that 
\begin{equation}
 \mathbf{E}_{x,n,s}^T\left[ {\rm e}^{-\gamma \Lambda_t-\theta Z_t - \varphi\tau_t}\right]  =
  \mathbf{E}_{x,n}^{T-s}\left[ {\rm e}^{-\gamma \Lambda_{t-s}-\theta Z_{t-s} }\right]
  {\rm e}^{-\varphi t} , \qquad s\leq t\leq t_0<T.
  \label{I1}
  \end{equation}
In order to evaluate expectation on the right-hand side above,  we  want to work with an appropriate representation of the  unique weak solution to \eqref{TcoupledSDE}.  We shall do so by following the example of how the weak solution to \eqref{coupledSDE} was identified in the form \eqref{weaksolutionrepresentation}. 

As before, we need to introduce additionally marked versions of the Poisson random measures ${\texttt N}_T^{1}$ and ${\texttt N}_T^{2}$, as well as an additional Poisson random measure ${\texttt N}_T^*$. We will insist that Poisson random measure $ {\texttt N}^1_T(\d s, \d r, \d{j}, \d\omega)$  on  $[0,T)\times[0,\infty) \times\mathbb{N}_0\times\mathbb{D}([0,\infty), \mathbb{R})$  has  intensity 
 ${\rm d} s\otimes r  {\rm e}^{-u_{T-s}(\infty) r} \Pi({\rm d}r) \otimes\sharp({\rm d}j)\otimes \mathbb{P}^{T-s}_r(\d \omega)$, Poisson random measure $ {\texttt N}^{2}_T({\rm d}s,{\rm d}r,{\rm d}k,{\rm d}j, \d\omega)$   on  $[0,T)\times[0, \infty)\times \NN_0\times\mathbb{N}\times \mathbb{D}([0,\infty), \mathbb{R})$ has intensity 
 \begin{align*}
&  
q^{T-s}
{\rm d}s\otimes   \eta_k^{T-s}({\rm d}r)\otimes p^{T-s}_k\sharp({\rm d}k)\otimes \sharp({\rm d}j)\otimes \mathbb{P}^{T-s}_r(\d \omega)
 \end{align*}
and Poisson random measure 
${\texttt N}^*_T(\d s, \d{j}, \d \omega)$ has intensity $2\beta\d s\otimes \sharp(\d j)\otimes\mathbb{Q}^{T-s}(\d\omega)$
on $[0,T)\times\mathbb{N}_0\times\mathbb{D}([0,\infty), \mathbb{R})$, where $\mathbb{Q}^T$ is the excursion measure associated to $\mathbb{P}^T_r$, $r\geq 0$, satisfying 
\begin{equation}
\mathbb{Q}^T(1 - {\rm e}^{-\gamma \omega_t}) = V^T_t(\gamma),\qquad \gamma >0,
\end{equation} 
for $0\leq t<T$, where $V^T_t$ was defined in \eqref{V^T_t}.
To recall some of the notation used in these rates, see \eqref{timeeta} and \eqref{rate}.

If the pair $(\Lambda, Z)$ has law $\mathbf{P}^T_{x,n}$, then we can write 
\begin{equation}
\Lambda_t =X_t + D_t,\qquad t< T,
\label{X+D}
\end{equation}
where $X$ is autonomously independent with law $\mathbb{P}^T_x$ and, given ${\texttt N}^{2}$,  $D$ is the uniquely identified (up to almost sure modification) `dressed skeleton' described by 
\begin{align*}
D_t&=  \int_{0}^{t}\int_{0}^{\infty}\int_{1}^{Z_{s -}}\int_{\mathbb{D}([0,\infty), \mathbb{R})}  
\omega_{t-s} \, {\texttt N}_T^1(\d s,\d r,\d{j}, \d\omega)\\
&\hspace{2cm}
      + \int_{0}^{t}\int_{0}^{\infty}\int_{0}^{\infty}\int_{1}^{Z_{s- }} \int_{\mathbb{D}([0,\infty), \mathbb{R})} \omega_{t-s} \, {\texttt N}_T^{2}(\d s,\d r,\d{k},\d{j}, \d\omega)\\
      &
   \hspace{4cm}+  \int_0^t \int_{1}^{Z_{s- }}\int_{\mathbb{D}([0,\infty), \mathbb{R})}  \omega_{t-s} \, {\texttt N}_T^*(\d s, \d{j}, \d \omega),
\end{align*}
where $Z_0 = n$.
The verification of this claim follows almost verbatim the same as for \eqref{coupledSDE} albeit with obvious change to take account of the time-varying rates. We therefore omit the proof and leave it as an exercise for the reader.

With the representation \eqref{X+D}, as $Z$ is piecewise constant, we can condition on the sigma-algebra generated by ${\texttt N}_T^{2}$ and show, using Campbell's formula in between the jumps of $Z$, that, for $0\leq t<T$, $\gamma, \theta\geq 0$, $x\geq 0$ and $n\in\mathbb{N}_0$,
  \begin{align}
 &\mathbf{E}_{x,n}^{T}\left[ {\rm e}^{-\gamma \Lambda_{t}-\theta Z_{t} }\right]\notag\\
 &
= {\rm e}^{-x V^{T}_t(\gamma)}\mathbf{E}^T_{0,n}\left[ {\rm e}^{-\theta Z_t- \int_0^t Z_{v}
\phi_{u_{T-v}(\infty)}(V^{T-v}_{t-v}(\gamma)){\rm d}v 
}
\prod_{w\leq t}\left(\int_0^\infty {\rm e}^{- rV^{T-w}_{t-w}(\gamma)} \eta^{T-w}_{\Delta Z_w+1} (\d r)\right)
\right] 
\label{jumpproduct}
\end{align} 
where 
\[
V^T_t(\gamma): = u_{t}(\gamma + u_{T-t}(\infty))-u_{T}(\infty), \qquad 0\leq t<T,
\]
and, for $\lambda, z \geq 0$,
\[
\phi_\lambda(z) = 2\beta z + \int_0^\infty (1- {\rm e}^{-z r})r{\rm e}^{-\lambda r}\Pi(\d r).
\]

Given the identities \eqref{I1} and \eqref{jumpproduct}, the two required verifications in \eqref{check1} and \eqref{check2}  follow easily as direct consequence of continuity and bounded convergence in \eqref{jumpproduct}. 


The target semigroup $\mathcal{P}$   on $f\in C_0([0,\infty)\times\mathbb{N}_0\times[0,\infty))$ is defined  as follows. For fixed $n\in\mathbb{N}_0$, $x\geq 0$, let $\mathbb{P}^{(n)}_{x}$  be the law of the homogeneous Markov process described by the weak  solution to
\begin{align*}
 X_t = x&+\alpha\int_0^t X_{s-}\d s+\sqrt{2\beta}\int_0^t\int_0^{X_{s-}} W(\d s,\d u)
+ \int_0^t\int_0^\infty\int_0^{X_{s-}} r \tilde{N}(\d s,\d r,\d u) \notag\\
&+\, \int_0^t\int_0^\infty r N^{(*,n)}(\d s,\d r) + 2n \beta t,\qquad t\geq0,
 \end{align*} 
 with $W,N$ and $N^{(*,n)}$ is a Poisson random measure  
 on $[0,\infty)^2\times\mathbb{D}([0,\infty), \mathbb{R})$  with intensity measure $n\d s\otimes r\Pi(\d r)\otimes\mathbb{P}_r(\d\omega)$.  Note, we have at no detriment to consistency that $\mathbb{P}^{(0)}_{x}$ can be replaced by $\mathbb{P}_{x}$. Then, we take the role of  $\mathcal{P}_t$ played by the semigroup $\texttt{P}_t^\uparrow$  given 
 by 
\begin{align*}
\texttt{P}^\uparrow_t[f](x,n,s)&: = \mathbf{E}^{(n)}[f(X_t, n, \tau_t)| X_s =x, \tau_s =s],
\end{align*}
for $0\leq s\leq t\leq t_0$, $n\geq 0$, and $\texttt{P}^\uparrow_t[f](x,n,s): =f(x,n,s)$ otherwise.
Here $f\in C_0([0,\infty)\times\mathbb{N}_0\times[0,\infty))$,
and  $\tau_t=t$,  as above. 
Notice $(X, \mathbf{P}^{(n)}_{x})$   is a branching process with immigration, whose Laplace transform is given by 
\[
 \mathbf{E}^{(n)}_{x}({\rm e}^{-\gamma X_t})= {\rm e}^{-xu_{t}(\gamma)- n\int_0^{t} \phi_0(u_{t-v}(\gamma))\d v},
 \qquad \gamma\geq 0.
 \]
  From this, it is easily seen that  $\texttt{P}^\uparrow_t$ 
is Feller as well.

Lastly, for each $T\geq 0$ we take $\nu^T$  the measure on $[0,\infty)\times\mathbb{N}_0\times[0,\infty)$ given for each $x\geq 0$  by 
$\delta_x\otimes \pi^{T,x} \otimes \delta_0$, with
$
\pi^{T,x}(\cdot) = \sum_{n\geq 1}\varpi_n^{T,x} \delta_n(\cdot). 
$
Recall from \eqref{condprob} that  $\pi^{T,x}$ converges weakly, as $T\to\infty$, to the measure $\delta_1(\cdot)$ on $\mathbb{N}_0$, hence $\nu^T$ converges weakly  to $\nu:=\delta_x\otimes \delta_1 \otimes \delta_0$. 
Thus, in order to invoke Theorem 2.5  in \cite[Chapter 4]{EthKur},  we just need to check the analogue of \eqref{Fellercgce} in our setting. 

To this end, notice first that we can restrict ourselves to $0\leq s\leq t\leq t_0$, since otherwise $\texttt{P}^\uparrow_t[f](x,n,s)=\texttt{P}^T_t[f](x,n,s)$ by definition.
Then note from \eqref{rate} that $q^T\to0$ as $T\to\infty$, and this yields that, under $\mathbf{P}^T_{0,n}$, process $Z$  converge in probability  uniformly on  $[0,t]$,   as $T\to\infty$  (cf. Theorem 6.1, Chapter 1, p28 of \cite{EthKur}) to the constant process $Z_s\equiv n$, $s\leq t$. 
Referring back to \eqref{jumpproduct},   the  continuity in $T$ of the deterministic quantities  as they appear on the right-hand side and the previously mentioned uniform  convergence of $(Z, \mathbf{P}^T_{0,n})$ together  imply that, for $x\geq 0,0\leq s\leq t\leq t_0$, $n\in\mathbb{N}_0$,
\begin{align*}
\lim_{T\to\infty}\texttt{P}_t^T[f_{\gamma,\theta,\varphi}](x,n, s)& = {\rm e}^{-\varphi t}
\lim_{T\to\infty}\mathbf{E}^{T-s}_{x,n}\left[ {\rm e}^{-\gamma \Lambda_{t-s}-\theta Z_{t-s} }\right] \\
& = \texttt{P}^\uparrow_t[f_{\gamma,\theta,\phi}](x,n,s)\\
&= {\rm e}^{-xu_{t-s}(\gamma)-\theta n - n\int_0^{t-s} \phi_0(u_{t-s-v}(\gamma))\d v-\varphi t},
\end{align*}
where 
\[
f_{\gamma,\theta, \varphi}(x,n,s)
:= {\rm e}^{-\gamma x- \theta n - \varphi s},\qquad \gamma,\theta, \varphi, x,s\geq 0, n\in\mathbb{N}_0.
\]

To conclude, it is thus enough to prove that  this convergence holds uniformly in $x\geq 0,0\leq s\leq t$, $n\in\mathbb{N}_0$, where $t\leq t_0$.  
Consider fixed $R>0$ and $N\in \mathbb{N}$. Since $V_t^T(\gamma)$ defined above is nonnegative and,  for each $n\in \mathbb{N}_0$,  $Z_t\geq Z_0=n$, $t\geq0$, a.s. under $\mathbf{P}^T_{0,n}$, for all $T>0$, using the triangle inequality, we have
\begin{equation*}
\sup_{x\geq 0,s\leq t,n\in \mathbb{N}_0}|\texttt{P}^T_t[f_{\gamma,\theta,\varphi}](x,n,s)-\texttt{P}^\uparrow_t[f_{\gamma,\theta,\varphi}](x,n,s)|\leq A_R(T) + A^R(T) + B_N(T)+B^N
\end{equation*}
where we have set:
$$A_R(T):= \sup_{x\leq R,s\leq t}| {\rm e}^{-x V^{T-s}_{t-s}(\gamma)}-   {\rm e}^{-xu_{t-s}(\gamma) } | \, , \quad  A^R(T):=   \sup_{s\leq t} {\rm e}^{-R V^{T-s}_{t-s}(\gamma)}+ \sup_{s\leq t}   {\rm e}^{-R u_{t-s}(\gamma) }   ,$$
\begin{equation*}
\begin{split}
B_N(T):= & \sup_{n\leq N,s\leq t}  \bigg| \mathbf{E}^{T-s}_{0,n}\Bigg[ {\rm e}^{-\theta Z_{t-s}- \int_0^{t-s} Z_{v}
\phi_{u_{T-s-v}(\infty)}(V^{T-s-v}_{t-s-v}(\gamma)){\rm d}v }\\
&\hspace{2.5cm}\prod_{w\leq t-s}\left(\int_0^\infty {\rm e}^{- rV^{T-s-w}_{t-s-w}(\gamma)} \eta^{T-s-w}_{\Delta Z_w+1} (\d r)\right)
\Bigg]
 -   {\rm e}^{-\theta n - n\int_0^{t-s} \phi_0(u_{t-s-v}(\gamma))\d v}
 \bigg|\\
 \end{split}
 \end{equation*}
and  $B^N =2  {\rm e}^{-\theta N}.$ 
Firstly, it is not hard to see that 
\begin{equation*}
 A_R(T)\leq \sup_{s\leq t} R|  V^{T-s}_{t-s}(\gamma)- u_{t-s}(\gamma)  |=  R\sup_{s\leq t}   |u_{t-s}(\gamma+  u_{T-t}(\infty))-  u_{T-s}(\gamma)-u_{t-s}(\gamma)|.
 \end{equation*}
 The identity ${\partial u_s(\theta)}/{\partial \theta}= {\rm e}^{-\int_0^s\psi'(u_r(\theta))\d r}$ (see (12.12) in  \cite[Chapter 12]{Kbook}) and the fact that $\psi'(\theta)\geq 0$ allows us to 
 estimate $|u_{t-s}(\gamma+  u_{T-t}(\infty))- u_{t-s}(\gamma)|$ by $u_{T-t}(\infty)$.
 Recalling that $u_T(\infty)\rightarrow 0$ and $u_T(\gamma)\rightarrow 0$ as $T\rightarrow\infty$, it follows that $A_R(T)$ tends to $0$ as $T\to \infty$,  for each $R>0$.
Next, since $(s,\gamma)\mapsto u_s(\gamma)$ is increasing in $\gamma$ and decreasing in $s$, we have
 $$ V^{T-s}_{t-s}(\gamma) \geq u_{t-s}(\gamma +  u_{T-t}(\infty))- u_{T-t}(\infty)\geq \inf_{\lambda\leq  u_{T-t}(\infty)}u_{t}(\gamma+\lambda)-\lambda,$$
 which, for  $T$ sufficiently large,  is bounded from below by $u_t(\gamma)/2>0$.  Fix $\varepsilon>0$. Choosing $R>0$ such that 
 ${\rm e}^{-R u_t(\gamma)/2}+   {\rm e}^{-R u_{t}(\gamma) } \leq \varepsilon$ we thus get
  $$\limsup_{T\to \infty} A^R(T)\leq \varepsilon.$$ 
With regard to the term $B_N(T)$, we have
  \begin{align}
B_N(T)\leq &\,  \max_{n\leq N}\sup_{s\leq t} \mathbf{E}^{T-s}_{0,n} \bigg[ 1\wedge \left(  \sup_{v\leq t} Z_v   \int_0^{t-s} |
\phi_{u_{T-s-v}(\infty)}(V^{T-s-v}_{t-s-v}(\gamma)) -  \phi_0(u_{t-s-v}(\gamma)) | {\rm d}v
  \right) \bigg]\notag \\
& \quad + \max_{n\leq N} \sup_{s\leq t} \mathbf{E}^{T-s}_{0,n} \bigg[   \bigg|  {\rm e}^{-\theta Z_{t-s}}
- {\rm e}^{-\theta n }
 \bigg|   \bigg]\notag \\
  & \quad +  \max_{n\leq N}  \sup_{s\leq t}  \mathbf{E}^{T-s}_{0,n} \bigg[ \bigg| 1-\prod_{w\leq s}\left(\int_0^\infty {\rm e}^{- rV^{T-s-w}_{t-s-w}(\gamma)} \eta^{T-s-w}_{\Delta Z_w+1} (\d r)\right) \bigg|   \bigg]\notag\\
  \leq &\,  \max_{n\leq N}\sup_{s\leq t} \mathbf{E}^{T-s}_{0,n} \bigg[ \sup_{s'\leq t}1\wedge \left(  \sup_{v\leq t} Z_v   \int_0^{t-s'} |
\phi_{u_{T-s'-v}(\infty)}(V^{T-s'-v}_{t-s-v}(\gamma)) -  \phi_0(u_{t-s'-v}(\gamma)) | {\rm d}v
  \right) \bigg]\notag \\
& \quad + \max_{n\leq N} \sup_{s\leq t} \mathbf{E}^{T-s}_{0,n} \bigg[  \sup_{s'\leq t} \bigg|  {\rm e}^{-\theta Z_{t-s'}}
- {\rm e}^{-\theta n }
 \bigg|   \bigg]\notag \\
  & \quad +  \max_{n\leq N}  \sup_{s\leq t}  \mathbf{E}^{T-s}_{0,n} \bigg[ \sup_{s'\leq t}\bigg| 1-\prod_{w\leq s'}\left(\int_0^\infty {\rm e}^{- rV^{T-s'-w}_{t-s'-w}(\gamma)} \eta^{T-s'-w}_{\Delta Z_w+1} (\d r)\right) \bigg|   \bigg]. 
  \label{3 terms}
  \end{align}  
The first term on the right-hand side above is bounded by  $$ \max_{n\leq N} \sup_{s\leq t}  \mathbf{P}^{T-s}_{0,n}  ( \sup_{v\leq t} Z_v>n)+   1\wedge  \left(N t \sup_{w\leq t}  |
\phi_{u_{T-w}(\infty)}(V^{T-w}_{t-w}(\gamma)) -  \phi_0(u_{t-w}(\gamma)) | 
 \right) $$ and hence  goes to $0$ for each $N$ as $T\to \infty$.  On the other hand, as a function of $(Z_s,s\leq t)$, the expression inside the expectation in the second term of  \eqref{3 terms}
  is bounded and continuous with
respect to the Skorokhod  topology
 (recall that Skorokhod continuity is preserved for $Z$ under the operation of  supremum over finite time horizons). Moreover, it vanishes when $ Z_s\equiv n$, $0\leq s\leq t$. 
This implies that 
    this term goes to $0$ as well. Finally, the expression whose absolute value we take in the  the third term  of \eqref{3 terms} is bounded by $1$, and vanishes unless $Z$ jumps at least once on $[0,s]$. This shows that the last term is bounded by $  \max_{n\leq N} \sup_{s\leq t}  \mathbf{P}^{T-s}_{0,n}  ( \sup_{w\leq t}\Delta Z_w>0)$, which goes to $0$  when $T\to \infty$. Note that for all three terms in \eqref{3 terms}, we are using the fact that, if $g(T)\geq 0$ is continuous in $T$ and $\lim_{T\to\infty}g(T) = 0$,  then, for each $\varepsilon> 0$, and $0< t\leq t_0$, by choosing $T$ sufficiently large, we have $\sup_{s\leq t}g(T-s)<\varepsilon$. That is to say, $\lim_{T\to\infty}\sup_{s\leq t}g(T-s) = 0$.

    Putting the pieces  together and choosing $N\in  \mathbb{N}_0$ large enough such that $B^N\leq \varepsilon$, we thus get
$$\limsup_{T\to \infty} \sup_{x\geq 0,s\leq t,n\in \mathbb{N}_0}|\texttt{P}^T_t[f_{\gamma,\theta,\varphi}](x,n,s)-\texttt{P}^\uparrow_t[f_{\gamma,\theta,\varphi}](x,n,s)|\leq 2\varepsilon.$$
Since $\varepsilon$ was arbitrary this shows the convergence of the semigroups \eqref{Fellercgce} in our setting which, together with the weak convergence of the initial configurations, gives the weak convergence of the associated processes on $[0,t_0]$. 
And since $t_0>0$ was chosen arbitrarily, this also completes the proof of Theorem \ref{skeletontospine}.
\end{proof}

\section*{Acknowledgements}

 Part of this work was carried out whilst AEK was visiting the Centre for Mathematical Modelling, Universidad de Chile and JF was visiting the Department of Mathematical Sciences at the University of Bath, each is grateful to the host institution of the other for their support.  
The authors would also like to thank the anonymous referees whose extensive reading of earlier versions of this paper led to many improvements.

\bibliography{biblio}{}
\bibliographystyle{plain}

\end{document}